\documentclass[a4paper]{amsart}
\pdfoutput=1 
\usepackage[utf8]{inputenc}
\usepackage[T1]{fontenc}
\usepackage{lmodern}
\usepackage{amssymb}
\usepackage{amsmath}
\usepackage{mathtools}
\usepackage[lite]{amsrefs}

\usepackage{enumitem}
\setlist[enumerate]{font=\textup}

\usepackage{microtype}
\usepackage[pdftitle={A characterization of simplicity of reduced groupoid C*-algebras},
pdfauthor={Paolo Boldrini},
pdfsubject={Mathematics},hidelinks]{hyperref}

\newtheorem{introthm}{Theorem}

\newtheorem{introcor}[introthm]{Corollary}

\newtheorem{thm}{Theorem}[section]
\newtheorem{lem}[thm]{Lemma}
\newtheorem{prop}[thm]{Proposition}
\newtheorem{cor}[thm]{Corollary}

\theoremstyle{definition}
\newtheorem{defn}[thm]{Definition}
\newtheorem{qst}[thm]{Question}
\newtheorem*{nota}{Notation}
\theoremstyle{remark}

\newcommand{\defeq}{\vcentcolon=}
\def\acts{\curvearrowright}
\def\inv{^{-1}}
\newcommand{\N}{\mathbb N}
\newcommand{\Z}{\mathbb{Z}}

\newcommand{\C}{\mathbb{C}}
\newcommand{\cs}{\mathrm{C}^*}
\newcommand{\cA}{\mathcal{A}}
\newcommand{\cG}{\mathcal{G}}
\newcommand{\cH}{\mathcal{H}}
\DeclareMathOperator{\Ad}{Ad}
\DeclareMathOperator{\Sub}{Sub}
\DeclareMathOperator{\Iso}{Iso}
\DeclareMathOperator{\Ind}{Ind}
\DeclareMathOperator{\Am}{Am}
\newcommand*{\id}{\mathrm {id}}

\title[A characterization of simplicity of reduced groupoid C*-algebras]{A characterization of simplicity of reduced groupoid C*-algebras}
\author{Paolo Boldrini}
\email{paolob@chalmers.se}
\address{ Department of Mathematical Sciences, Chalmers University of
	Technology and University of Gothenburg, Göteborg SE-412 96, Sweden
}

\begin{document}
	
	\begin{abstract} 
    We show that, for a second-countable locally compact Hausdorff étale minimal groupoid with compact unit space, simplicity of the reduced groupoid C*-algebra implies the existence of a comeager set of unit points with C*-simple isotropy group. Combining this result with work of Christensen and Neshveyev on exotic completions of isotropy group algebras, we show that the converse implication is also true. Finally, we construct a Hausdorff étale minimal groupoid with an isotropy group whose induced exotic completion differs from its reduced group C*-algebra, answering a question of Christensen and Neshveyev.
	\end{abstract}
	
	\maketitle

    \section{Introduction}

    Many $\cs$-algebras of interest are constructed from geometric or dynamical data. The framework of locally compact étale groupoids unifies many of these constructions, including reduced group $\cs$-algebras, reduced crossed products by discrete group actions, and graph $\cs$-algebras. A central problem is to determine how the structure of the resulting algebra reflects the properties of the underlying groupoid. In this paper we focus on simplicity, that is, the absence of non-trivial closed ideals.
    
     For actions of countable discrete groups, minimality of the action is a necessary condition for simplicity of $C(X)\rtimes_r G$, and for topologically amenable actions, simplicity of the crossed product is equivalent to the combination of minimality and topological freeness \cites{KawamuraTomiyama,ArchboldSpielberg}. This characterization fails outside the amenable world as shown by the trivial action of $\mathbb F_2$ on a single point. Powers showed that the associated $\cs$-algebra $\cs_r(\mathbb{F}_2)$ is simple \cite{Powers}, giving rise to the study of $\cs$-simple groups, that is, groups whose reduced $\cs$-algebra is simple.

    Given a locally compact Hausdorff étale groupoid $\cG$, a groupoid $\cs$-algebra is a completion of the convolution algebra of compactly supported continuous functions $C_c(\cG)$ with respect to a suitable $\cs$-norm. Such completions in general do not coincide, and in this paper we will mainly focus on the so-called reduced groupoid $\cs$-algebra $\cs_r(\cG)$, obtained from the regular representation of $\cG$. Simplicity of $\cs_r(\cG)$ has been studied extensively and connected to $\cs$-inclusions \cite{CrytserNagy}, exotic completions of isotropy group algebras \cites{ChristensenNeshveyev1,ChristensenNeshveyev2}, and amenable sections of isotropy groups \cite{KKLRU}.

    Let $\cG$ be a locally compact Hausdorff étale groupoid with compact unit space $X=\cG^{(0)}$. Given a point $x\in X$, its isotropy group $\cG_x^x$ is the group of arrows with source and range $x$. The main question investigated in this paper is the following:
    \begin{center}
        \textit{Is it possible to characterize simplicity of $\cs_r(\cG)$ in terms of $\cs$-simplicity of the isotropy groups of $\cG$?}
    \end{center}

    The first instance of this phenomenon was observed for discrete groups acting on compact spaces. In this context, the isotropy group at a point $x$ coincides with the stabilizer subgroup $G_x\defeq \{g\in G\colon gx=x\}$, and the algebra $\cs_r(\cG)$ coincides with the reduced crossed product $C(X)\rtimes_rG$. Ozawa showed \cite[Theorem 14]{Ozawa} that, when the action is minimal, $\cs$-simplicity of a single stabilizer forces simplicity of $C(X)\rtimes_rG$. Recently, Hartman and Kalantar showed that when a minimal action of a countable group gives rise to a simple crossed product, then there is a point whose stabilizer subgroup has trivial amenable radical \cite{HartmanKalantar}. Even more recently, Bray and Kennedy \cite{Bray-Kennedy} and, independently, the author \cite{Boldr} showed that when the acting group is countable the converse of Ozawa's result holds: simplicity of the reduced crossed product guarantees the existence of a point---or, equivalently, a comeager set of points---with $\cs$-simple stabilizer. Countability of the acting group cannot be removed, as shown in \cite[Example 3.6]{Bray-Kennedy}.

    We show that a similar equivalence holds in the more general setting of reduced groupoid $\cs$-algebras.

    \begin{introthm}\label{thm:mainshort}
        Let $\cG$ be a second-countable locally compact Hausdorff étale minimal groupoid with compact unit space $X$. Then the following are equivalent:
        \begin{enumerate}
            \item The reduced groupoid $\cs$-algebra $\cs_r(\cG)$ is simple;
            \item the set
            \[
            \{x\in X\colon \cG_x^x \text{ is $\cs$-simple}\} 
            \] is comeager in $X$.
        \end{enumerate}
    \end{introthm}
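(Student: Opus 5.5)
We prove the two implications separately. The direction (2)$\Rightarrow$(1) rests on the work of Christensen and Neshveyev; the direction (1)$\Rightarrow$(2) adapts the stabilizer argument for group actions of \cite{Bray-Kennedy} and \cite{Boldr} to groupoids.

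\textbf{(2)$\Rightarrow$(1).} Christensen and Neshveyev showed that, for a minimal second-countable Hausdorff étale groupoid, $\cs_r(\cG)$ is simple if and only if there is a point $x$ (equivalently, a dense $G_\delta$ set of points) with the following property: every nonzero positive element of $C_c(\cG_x^x)$ has nonzero image in a certain exotic completion $\cs_e(\cG_x^x)$. This completion is obtained by restricting the regular representation of $\cG$ to the isotropy group, i.e.\ by inducing from nearby points. Every such exotic norm dominates the reduced norm, so $\cs_e(\cG_x^x)$ has $\cs_r(\cG_x^x)$ as a quotient.

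Now pick $x$ in the comeager set of (2) intersected with their comeager set of good points; the intersection is nonempty by the Baire category theorem. Since $\cG_x^x$ is $\cs$-simple, $\cs_r(\cG_x^x)$ is simple. The kernel of the quotient map $\cs_e(\cG_x^x)\to\cs_r(\cG_x^x)$ is an ideal, and I would argue that their ideal-intersection condition reduces to a condition on this reduced quotient, which holds trivially because $\cs_r(\cG_x^x)$ is simple. The point to check carefully is that their criterion is formulated so that only the reduced quotient matters. If instead it requires $\cs_e=\cs_r$, I would use Kennedy's characterization: $\cs$-simplicity means every amenable uniformly recurrent subgroup is trivial, and the ideals of $\cs_e$ that could obstruct simplicity come from amenable subgroups.

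\textbf{(1)$\Rightarrow$(2).} First, $\cs$-simplicity of $\cG_x^x$ is a $G_\delta$ condition in $x$. Using second countability, fix a countable basis of open bisections. By Kennedy's characterization, $\cs$-simplicity is equivalent to the following: for every finite set $F$ in the group and every $\varepsilon>0$ there is a convex combination of group elements that averages $\lambda(g)$, $g\in F\setminus\{e\}$, to norm less than $\varepsilon$. This is Powers averaging (Haagerup's criterion). I would express this via bisections and show that each condition is open along the set of points where the isotropy varies appropriately. So it suffices to show that the set of $\cs$-simple points is dense, or that its complement is meager.

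Suppose instead that the non-$\cs$-simple points are nonmeager. By Baire, and by semicontinuity of the isotropy map $x\mapsto\cG_x^x$ into the Chabauty space of subgroups of $\cG$ (continuity holds on a comeager set), there is a nonempty open $U$ on which the isotropy groups have nontrivial amenable uniformly recurrent subgroups. Following the group case, I would then build a boundary-type object: a $\cG$-space with amenable stabilizers that carries a nontrivial amenable subgroupoid section. Using amenable sections as in \cite{KKLRU}, one constructs a proper ideal as the kernel of the representation induced from this amenable subgroupoid. The representation is weakly contained in the regular one but not faithful, because amenability makes the averaging over the section fail Powers' property. This contradicts simplicity.

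\textbf{Main obstacle.} The hardest step is this last construction. Passing from a nonmeager set of non-$\cs$-simple isotropy groups to a coherent, continuously varying family of amenable subgroups requires a measurable or Baire selection argument in the Chabauty topology. One must then verify that the induced ideal is both nonzero and proper. I would try first to mimic the Bray--Kennedy selection through the Furstenberg boundary of each isotropy group, restricted to the dense $G_\delta$ set of continuity points of the isotropy map.
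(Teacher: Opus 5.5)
Your argument for (1)$\Rightarrow$(2) has a genuine gap exactly where you place the ``main obstacle'': the missing idea is \emph{uniformity}. Theorem~\ref{thm:KKLRU} needs a \emph{confined} amenable section, meaning that over some fibre no subgroup in $\overline{\cG\cdot\Lambda}$ is contained in $X$. It is confinement, not a failure of Powers averaging, that makes the induced representation non-faithful. Knowing that every $\cG_x^x$ with $x$ in an open set has a nontrivial amenable URS gives no such control. The nonidentity elements witnessing non-$\cs$-simplicity can drift towards $X$ or off to infinity as $x$ varies, and then Chabauty limits of the subgroups you transport can be trivial.

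The paper fixes this as follows. Kennedy's criterion puts every bad point in some $B_F$ with $F$ finite, hence in $\bigcup_n B_{F_n}$ for a compact exhaustion $(F_n)$ of $\cG\setminus X$. If some $B_F$ is nonmeager, minimality and the local conjugations $\Ad_{W,D}$ yield a \emph{single} compact $F_0\subseteq\cG\setminus X$ with $B_{F_0}$ comeager (Lemma~\ref{lem:comeager}). One then picks $x$ in an invariant comeager $Y\subseteq B_{F_0}$ and one element $A$ of the Furstenberg URS of $\cG_x^x$, and conjugates $A$ along the dense orbit of $x$. The Furstenberg URS is preserved by isomorphisms, so every element of $\cG\cdot\Lambda$ lies in $\cA_{\cG_z^z}$ for some $z\in Y$. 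Hence it meets $F_0$ by Lemma~\ref{lem:groupoid-urs-obstruction}. This is why one needs \emph{every} member of $\cA_H$ to meet $F$ (Lemma~\ref{lem:urs-obstruction}): the amenable witnesses at different points of the orbit are unrelated. Meeting $F_0$ is a closed condition in $\Sub(\cG)$, so $\Lambda$ is confined, contradicting Theorem~\ref{thm:KKLRU}. No measurable or Baire selection and no continuity points of $\Iso$ are needed: a section only has to live on a dense set, and a single orbit suffices.

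There are two further problems. First, your claim that $\cs$-simplicity of $\cG_x^x$ is a $G_\delta$ condition in $x$ is unsupported. The isotropy groups and their relations change with $x$, so the norm in $\cs_r(\cG_x^x)$ of a fixed combination of bisections need not be continuous in $x$. In Section~\ref{sec:counterexample} the simple random walk element has norm $1$ at each $x_n$ but $\sqrt3/2$ at $x_\infty$. Some regularity is needed to pass from ``nonmeager'' to ``comeager on a nonempty open set''; the paper gets it from analyticity of each $B_F$ (Theorem~\ref{thm:BFCFanalytic}), hence the Baire property.

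Second, in (2)$\Rightarrow$(1) the property you attribute to Christensen and Neshveyev is automatic as stated, since every nonzero $a\in\C[\cG_x^x]$ has $\|a\|_e\geq\|a\|_r>0$. Your main suggestion, that only the reduced quotient matters so that simplicity of $\cs_r(\cG_x^x)$ suffices, would settle the open Question~\ref{qst:onepoint}. Your Kennedy/URS fallback also fails at general points: in Section~\ref{sec:counterexample} the group $H_\infty$ is $\cs$-simple while $\ker q_{x_\infty}\neq 0$. What you need is \cite[Theorem 5.1]{ChristensenNeshveyev2}: since $\cG$ is Hausdorff, the set $\{x\colon \cs_e(\cG_x^x)=\cs_r(\cG_x^x)\}$ is comeager. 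Intersecting it with the comeager set in (2) gives a point where $\cG_x^x$ is $\cs$-simple and $\cs_e=\cs_r$. Then \cite[Corollary 4.4]{ChristensenNeshveyev2} gives simplicity of $\cs_r(\cG)$.
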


    Two main differences emerge when this statement is compared to the group-action counterpart. 
    
    The first is the assumption that $\cG$ is second-countable, which forces metrizability of the unit space $X$. The reason why this assumption is needed is that the proof of (1)$\Rightarrow$(2) is descriptive-set-theoretic in nature. In the group-action setting, a large part of this machinery can be avoided. The key difference is that the set of amenable subgroups of the acting group is closed in the Chabauty topology, while the subset of amenable isotropy subgroups is Borel, but not closed in the groupoid-Chabauty topology (see Proposition \ref{prop:amborel} and Proposition \ref{prop:amenability-not-closed}). The higher descriptive complexity in the groupoid setting requires more sophisticated techniques that are only available for Polish spaces.

    The second difference is that, unlike for transformation groupoids, it is not known whether the existence of a single unit point with $\cs$-simple isotropy is sufficient to guarantee simplicity of the reduced groupoid $\cs$-algebra. This is closely connected with the exotic completion of isotropy group algebras introduced by Christensen and Neshveyev \cite{ChristensenNeshveyev1}. Given a point $x\in X$ and an element $a\in \C[\cG_x^x]$, define the norm
    \[
    \|a\|_{e,x}\defeq \inf \{\|f\|_r\colon f\in C_c(\cG), f|_{\cG_x^x}=a\}.
    \]
    We will often suppress the subscript $x$ when the unit point is clear. The completion of $\C[\cG_x^x]$ with respect to $\|\cdot\|_e$ is denoted by $\cs_e(\cG_x^x)$, and in general does not coincide with the reduced group $\cs$-algebra $\cs_r(\cG_x^x)$.
    
    This exotic completion plays a crucial role in the proof of Theorem \ref{thm:mainshort} and allows us to extend the equivalence in Theorem \ref{thm:mainshort} to the following statement.

    \begin{introthm}\label{thm:mainlong}
       Let $\cG$ be a second-countable locally compact Hausdorff minimal étale groupoid with compact unit space. The following are equivalent:
       \begin{enumerate}
           \item the reduced groupoid $\cs$-algebra $\cs_r(\mathcal{G})$ is simple;
           \item there is a comeager set of unit points with $\cs$-simple isotropy;
           \item there is a comeager set of unit points with $\cs_e(\cG_x^x)$ simple;
           \item there is a point $x\in X$ with $\cs_e(\cG_x^x)$ simple;
           \item there is a point $x\in X$ such that $\cG_x^x$ is $\cs$-simple and $\cs_e(\cG_x^x)=\cs_r(\cG_x^x)$.
       \end{enumerate}
   \end{introthm}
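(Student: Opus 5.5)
We prove the cycle (1)$\Rightarrow$(2)$\Rightarrow$(3)$\Rightarrow$(4)$\Rightarrow$(1), and separately (4)$\Leftrightarrow$(5).

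\emph{Elementary links.} By definition of $\|\cdot\|_{e}$, the restriction map $C_c(\cG)\to\C[\cG_x^x]$ is contractive from $\|\cdot\|_r$ to $\|\cdot\|_{e}$. We first record $\|a\|_{r}\le\|a\|_{e}$ on $\C[\cG_x^x]$. The standard argument evaluates the regular representation of $\cG$ on $\ell^2(\cG_x)$ and compresses to $\ell^2(\cG_x^x)$: if $f|_{\cG_x^x}=a$, the compression of $\lambda_x(f)$ acts on $\ell^2(\cG_x^x)$ as left convolution by $a$, so $\|\lambda(a)\|\le\|f\|_r$. Hence $\cs_e(\cG_x^x)$ surjects onto $\cs_r(\cG_x^x)$. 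Consequences:
\begin{itemize}
\item[(4)$\Rightarrow$(5):] if $\cs_e(\cG_x^x)$ is simple, this surjection is injective, so $\cs_e(\cG_x^x)=\cs_r(\cG_x^x)$ and this algebra is simple, i.e.\ $\cG_x^x$ is $\cs$-simple.
\item[(5)$\Rightarrow$(4):] trivial.
\item[(3)$\Rightarrow$(4):] trivial, since $X$ is a nonempty compact Hausdorff space, hence Baire, so a comeager set is nonempty.
\end{itemize}

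\emph{(4)$\Rightarrow$(1).} Here we invoke the results of Christensen and Neshveyev. For minimal $\cG$, $\cs_r(\cG)$ is simple iff for some (equivalently every) $x$ the ideal structure is detected at the isotropy. Concretely, we would cite their criterion: every nonzero ideal $I$ of $\cs_r(\cG)$ restricts, via the conditional-expectation-type map $\cs_r(\cG)\to\cs_e(\cG_x^x)$, to a nonzero ideal; minimality then propagates fullness. If their theorem is stated as ``$\cs_r(\cG)$ simple iff minimal and $\cs_e(\cG_x^x)$ simple for all $x$ in a dense set / some $x$'', we apply it directly.

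\emph{(1)$\Rightarrow$(2) and (2)$\Rightarrow$(3).} These are the heart of the paper.

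For (1)$\Rightarrow$(2), argue by contradiction. Let $A$ be the set of $x$ for which $\cG_x^x$ is not $\cs$-simple, equivalently for which $\cG_x^x$ has a nontrivial amenable boundary stabilizer, or for which there is an amenable subgroup $H\le\cG_x^x$ with a suitable recurrence property (a Kennedy-type characterization). Using that the amenable isotropy subgroups form a Borel set in the groupoid-Chabauty space (Proposition \ref{prop:amborel}), show that $A$ is analytic, hence has the Baire property. Since $A$ is $\cG$-invariant and $\cG$ is minimal, a $0$-$1$ law shows that if $A$ is not meager, it is comeager. On such a comeager set, a Borel/Baire-measurable selection (Jankov–von Neumann, then continuity on a comeager set) chooses recurrent amenable subgroups $H_x\le\cG_x^x$ varying continuously off a meager set. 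The closure of the graph gives a closed invariant set in the groupoid-Chabauty space, from which we build a nontrivial ideal: the kernel of the induced representation $\Ind$ from the subgroupoid of amenable sections, in the style of \cite{KKLRU}. We then check that this kernel is proper and nonzero, which contradicts (1).

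For (2)$\Rightarrow$(3), again use the Christensen–Neshveyev inequality $\|\cdot\|_e\ge\|\cdot\|_r$ together with the fact that, on a comeager set of points (continuity points of isotropy, or where $\cG$ is ``locally isotropy-continuous''), the norms coincide: $\|\cdot\|_e=\|\cdot\|_r$ generically. Combined with (2), a comeager intersection gives $\cs_e=\cs_r$ simple. Alternatively, route (2)$\Rightarrow$(1)$\Rightarrow$(3) via the Christensen–Neshveyev result that simplicity of $\cs_r(\cG)$ forces $\cs_e(\cG_x^x)$ simple for generic $x$.

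The main obstacle is (1)$\Rightarrow$(2): proving the Baire-property and selection step, and showing that the resulting ideal is nonzero and proper despite amenability not being closed (Proposition \ref{prop:amenability-not-closed}).
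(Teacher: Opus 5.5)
Your outer structure matches the paper. (3)$\Rightarrow$(4) and (4)$\Leftrightarrow$(5) are elementary, exactly as you say. (2)$\Rightarrow$(3) is \cite[Theorem 5.1]{ChristensenNeshveyev2}: the set where $\cs_e(\cG_x^x)=\cs_r(\cG_x^x)$ is comeager. (4)$\Rightarrow$(1) is (4)$\Rightarrow$(5) followed by \cite[Corollary 4.4]{ChristensenNeshveyev2}. Two side remarks: your heuristic that an ideal ``restricts'' to $\cs_e(\cG_x^x)$ is not accurate, since $\theta_{x,e}$ is only ucp, and your alternative route (2)$\Rightarrow$(1)$\Rightarrow$(3) is circular, so simply cite.

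The genuine gap is in (1)$\Rightarrow$(2), which is the real content of the theorem. It is missing two ideas. The first is uniformity. Your $0$--$1$ law applied to the invariant set $A$ of points with non-$\cs$-simple isotropy only shows that $A$ is comeager. But the finite set in Kennedy's criterion depends on the point, whereas confinement needs a single compact $F_0\subseteq\cG\setminus X$ met by every subgroup in $\overline{\cG\cdot\Lambda}$ over some point. The paper gets $F_0$ as follows. It covers $A\subseteq\bigcup_n B_{F_n}$ using a compact exhaustion $(F_n)$ of $\cG\setminus X$, and Baire category gives a non-meager $B_{F_n}$. Because $B_F$ is \emph{not} invariant, the $0$--$1$ argument is then run through Lemma~\ref{lem:local-equivariance}. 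This produces $F_0=\bigcup_j\Ad_{W_j,\overline{D_j}}(F_n)$ with $B_{F_0}$ comeager (Lemma~\ref{lem:comeager}).

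The second missing idea is a canonical witness. Even with $F_0$ in hand, a selection $y\mapsto K_y$ of amenable Kennedy witnesses, continuous or not, does not give a confined section. The orbit $\cG\cdot\Lambda$ contains $gK_yg\inv$ for every arrow $g$ from $y$ to $z$. These subgroups are only guaranteed to meet $g(F_0\cap\cG_y^y)g\inv$, not $F_0$. As $g$ varies, these finite sets can escape every compact set, so nothing stops $\overline{\cG\cdot\Lambda}$ from containing subgroups of $X$. Knowing $z\in B_{F_0}$ only says that \emph{some} amenable subgroup at $z$ works, not that the transported one does.

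The paper's fix goes in three steps.
\begin{enumerate}
\item Pass to the invariant comeager set $Y=X\setminus\cG\cdot(X\setminus B_{F_0})\subseteq B_{F_0}$, fix $x\in Y$ and $A\in\cA_{\cG_x^x}$, and transport $A$ along the dense orbit of $x$ to define $\Lambda$.
\item The Furstenberg URS is preserved by group isomorphisms, so every element of $\cG\cdot\Lambda$ lies in $\cA_{\cG_z^z}$ for some $z\in Y\subseteq B_{F_0}$. Lemma~\ref{lem:urs-obstruction}, via Lemma~\ref{lem:groupoid-urs-obstruction}, then shows that every member of the Furstenberg URS meets $F_0$ once any amenable witness does.
\item The set $\{L\colon L\cap F_0\neq\emptyset\}=\Sub(\cG)\setminus M_{F_0}$ is closed. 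Hence $\Lambda$ is an amenable confined section, contradicting Theorem~\ref{thm:KKLRU}.
\end{enumerate}
This makes the Jankov--von Neumann selection unnecessary. It also replaces your final step, building an induced-representation ideal and checking that it is ``proper and nonzero'', for which you give no argument. The paper instead appeals directly to the Kennedy--Kim--Li--Raum--Ursu characterization. The confinement hypothesis of that characterization is exactly what your construction fails to supply.
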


   It is not clear whether the existence of a single unit point with $\cs$-simple isotropy is enough to force simplicity of $\cs_r(\cG)$.

   Most of the implications in the theorem are clear or follow from the work of Christensen and Neshveyev. The main contribution of the paper is the implication (1)$\Rightarrow$(2). Its proof is based on a Baire category argument, Kennedy's characterization of $\cs$-simple groups \cite{Kennedy}, and the characterization of simplicity of reduced groupoid $\cs$-algebras in terms of amenable confined sections of isotropy groups obtained by Kennedy, Kim, Li, Raum and Ursu \cite{KKLRU}. 

   Christensen and Neshveyev showed that for transformation groupoids associated with partial actions of discrete groups the exotic norm $\|\cdot\|_e$ coincides with the reduced norm $\|\cdot\|_r$ on every isotropy group algebra \cite[Corollary 4.15]{ChristensenNeshveyev1}. Combined with Theorem \ref{thm:mainlong} we obtain the following generalization of the results of Ozawa and Bray--Kennedy to partial actions.

   \begin{introcor}
       Let $G$ be a countable discrete group and $\alpha$ be a minimal partial action on a compact metrizable space $X$. The following are equivalent:
       \begin{enumerate}
           \item the reduced crossed product $C(X)\rtimes_{\alpha,r} G$ is simple;
           \item there is a comeager set of points in $X$ with $\cs$-simple stabilizer;
           \item there is a point $x\in X$ with $\cs$-simple stabilizer.
       \end{enumerate}
   \end{introcor}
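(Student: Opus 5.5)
This corollary follows from Theorem \ref{thm:mainlong} applied to the transformation groupoid $\cG = G\ltimes_\alpha X$ of the partial action. The work is to check that the hypotheses hold and that the groupoid objects match the dynamical ones.

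\textbf{Setting up the groupoid.} Recall that $\cG=\{(g,x)\colon x\in X_{g^{-1}}\}$, where $X_{g^{-1}}\subseteq X$ is the open domain of $\alpha_g$. It carries the topology inherited from $G\times X$, which makes it a locally compact Hausdorff étale groupoid with unit space $X$. Since $G$ is countable and $X$ is compact metrizable, $\cG$ is second-countable and $X$ is compact. Minimality of $\alpha$ means that every orbit $\{\alpha_g(x)\colon x\in X_{g^{-1}}\}$ is dense, which is exactly minimality of $\cG$. Two standard identifications complete the dictionary. First, $\cs_r(\cG)\cong C(X)\rtimes_{\alpha,r}G$. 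Second, for each $x$ the map $g\mapsto (g,x)$ is an isomorphism from the stabilizer $G_x=\{g\colon x\in X_{g^{-1}},\ \alpha_g(x)=x\}$ onto $\cG_x^x$.

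\textbf{Deriving the equivalences.} Under these identifications, condition (1) of the corollary is condition (1) of Theorem \ref{thm:mainlong}, and condition (2) of the corollary is condition (2) of the theorem. So (1)$\Leftrightarrow$(2) follows immediately. The implication (2)$\Rightarrow$(3) holds because $X$ is compact metrizable, hence a Baire space, so a comeager subset of $X$ is nonempty. For (3)$\Rightarrow$(1), take $x$ with $G_x\cong\cG_x^x$ being $\cs$-simple. By \cite[Corollary 4.15]{ChristensenNeshveyev1}, $\|\cdot\|_e=\|\cdot\|_r$ on $\C[\cG_x^x]$, so $\cs_e(\cG_x^x)=\cs_r(\cG_x^x)$. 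Hence condition (5) of Theorem \ref{thm:mainlong} holds, and the theorem gives simplicity of $\cs_r(\cG)$.

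\textbf{Main obstacle.} Essentially all the content sits in Theorem \ref{thm:mainlong} and in the cited result of Christensen and Neshveyev. The only point needing care is that Corollary 4.15 of \cite{ChristensenNeshveyev1} applies in our generality, namely to partial actions on compact spaces with open domains and the resulting Hausdorff transformation groupoid, and that its identification of isotropy groups agrees with the map $g\mapsto(g,x)$ above. I would check this against the conventions of \cite{ChristensenNeshveyev1} before invoking it.
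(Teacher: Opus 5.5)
Your proposal is correct and follows the paper's proof: you identify $C(X)\rtimes_{\alpha,r}G$ with the reduced $\cs$-algebra of the partial transformation groupoid (the paper cites Li for this), obtain (1)$\Rightarrow$(2) from the main theorem, and obtain (3)$\Rightarrow$(1) by combining Christensen--Neshveyev's Corollary 4.15 with condition (5) of Theorem \ref{thm:mainlong}. Your use of the Baire category theorem to see that a comeager subset of $X$ is nonempty only spells out the step (2)$\Rightarrow$(3), which the paper calls immediate.
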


   We conclude the paper with an example of a second-countable locally compact Hausdorff étale minimal groupoid $\cG$ with compact unit space such that
\begin{enumerate}
    \item the set of amenable isotropy subgroups is not closed in $\Sub(\cG)$;
    \item\label{property:introexoticity} there exists $x\in\cG^{(0)}$ such that
    \[
        \cs_e(\cG_x^x)\neq \cs_r(\cG_x^x).
    \]
\end{enumerate}

In particular, \eqref{property:introexoticity} answers \cite[Question 4.5]{ChristensenNeshveyev2}.

\noindent\textit{Acknowledgments.} The author would like to thank Spyros Petrakos and Jamie     Bell for  useful discussions. The author was supported by the Kungl. Vetenskapsakademien (MA2025-0041).\medbreak
	
	\noindent\textit{AI statement.} During the preparation of this work, the author used LLMs to assist with exposition, literature searches and to develop some of the arguments. The author independently reviewed the mathematical content generated this way.


\section{Notation and preliminaries}\label{sec:prelim}
    \subsection{Groupoids and their C*-algebras}
    We recall here some notation. For a complete introduction to groupoids and groupoid $\cs$-algebras we refer the reader to \cites{Renault,Sims}.
    
    A \emph{topological groupoid} $\cG$ is a groupoid, that is a small category where all morphisms are invertible, equipped with a locally compact topology that makes composition and inversion continuous maps, and such that the \emph{unit space} $\cG^{(0)}$ is Hausdorff. We always assume that a groupoid has nonempty unit space. We define the \emph{source} and \emph{range} maps by $s(g)=g\inv g$ and  $r(g)\defeq gg\inv$. An \emph{étale} groupoid is a topological groupoid such that $s$ and $r$ are local homeomorphisms. In this case, the topology of $\cG$ has a basis consisting of \emph{open bisections}, that is, open subsets $W\subseteq \cG$ such that the restrictions to $W$ of $s$ and $r$ are homeomorphisms.

    For $x,y\in X=\cG^{(0)}$, write
    \[ 
    \cG_x=s^{-1}(x),\qquad \cG^y=r^{-1}(y),\qquad \cG_x^y=\cG_x\cap\cG^y.
    \]
    The set $\cG_x^x$ is a group under composition, and it is called the \emph{isotropy group} at $x$.

    An étale groupoid is \emph{minimal} if for every unit point $x\in \cG^{(0)}$ the set $r(s\inv(x))$ is dense in $\cG^{(0)}$. 
    
    For an open bisection $W$, denote by $\theta_W$ the partial homeomorphism from $s(W)$ to $r(W)$ sending each point $x\in s(W)$ to the range of the unique element $g\in W$ with $s(g)=x$. 

    \begin{nota}
        Throughout the paper, $\cG$ will denote a second-countable locally compact Hausdorff minimal étale groupoid with compact unit space $X\defeq \cG^{(0)}$. 
    \end{nota}

    Let $C_c(\cG)$ denote the vector space of continuous compactly supported complex-valued functions. For $f_1,f_2\in C_c(\cG)$, set
        \[
    (f_1*f_2)(g)
      =\sum_{h\in\cG^{r(g)}}f_1(h)f_2(h^{-1}g),
    \]
    and $f^*(g)=\overline{f(g^{-1})}.$
    For $x\in X$, the \emph{regular representation} $\lambda_x\colon C_c(\cG)\to B(\ell^2(\cG_x))$
    is given by $(\lambda_x(f)\xi)(g)=\sum_{h\in\cG_x}f(gh^{-1})\xi(h).$

    The \emph{reduced norm} is
    \[
    \|f\|_r=\sup_{x\in X}\|\lambda_x(f)\|,
    \]
    and $\cs_r(\cG)$ is the completion of $C_c(\cG)$ with respect to this norm.

    In \cites{ChristensenNeshveyev1, ChristensenNeshveyev2}, Christensen and Neshveyev associated possibly exotic $\cs$-algebras to isotropy groups of a (not necessarily Hausdorff) locally compact étale groupoid. We recall some of the relevant results obtained in their papers, but for simplicity we stick to the Hausdorff and second-countable setting. We also restrict to the reduced groupoid $\cs$-algebra, while most of their results are valid for more general completions.

    The following is the key notion introduced by Christensen and Neshveyev. We give here a simplified form, the equivalence with the original definition is \cite[Theorem 1.8]{ChristensenNeshveyev2}.

    \begin{defn}
        Let $x\in X$. For an element $a\in \C [\cG_x^x]$, let
        \[
        \|a\|_{e,x}=\inf \{\|f\|_r\colon f\in C_c(\cG), f|_{\cG_x^x}=a\}.
        \]
    \end{defn}

    We often omit the unit point $x$ from the notation in the norm. We denote by $\cs_e(\cG_x^x)$ the completion of $\C [\cG_x^x]$ with respect to $\|\cdot\|_{e}$ and we refer to it as the \emph{exotic completion} of $\cG_x^x$.
    
    Since $\|\cdot\|_e\geq\|\cdot\|_r$, the canonical inclusion $\C[\cG_x^x]\hookrightarrow \cs_r(\cG_x^x)$ extends to a quotient homomorphism $q_x\colon \cs_e(\cG_x^x)\to \cs_r(\cG_x^x)$. We write $\cs_e(\cG_x^x)= \cs_r(\cG_x^x)$ when the map $q_x$ is injective, hence an isomorphism.

    Since the groupoids we consider are Hausdorff, their singular ideal is zero, so \cite[Theorem 5.1]{ChristensenNeshveyev2} shows that the equality $\cs_e(\cG_x^x)= \cs_r(\cG_x^x)$ occurs quite often:

    \begin{thm}
        The set 
        \[
        \{x\in X\colon \cs_e(\cG_x^x)= \cs_r(\cG_x^x)\}
        \]
        is comeager in $X$.
    \end{thm}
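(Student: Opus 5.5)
This theorem is quoted from Christensen--Neshveyev, so the plan is to reconstruct their argument in the Hausdorff, second-countable setting. The strategy is to show that the set of points with $\|\cdot\|_{e,x}=\|\cdot\|_r$ on $\C[\cG_x^x]$ contains the set of continuity points of a countable family of semicontinuous functions. Such a set is comeager.

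\textbf{Step 1: a countable family of functions.} Fix a countable basis of open bisections $\{W_n\}$ and a countable dense $\mathbb{Q}[i]$-subalgebra $\mathcal{D}\subseteq C_c(\cG)$ spanned by functions supported in the $W_n$. Any $a\in\C[\cG_x^x]$ extends to some $f\in C_c(\cG)$, and by a partition-of-unity argument such an $f$ can be approximated by elements of $\mathcal{D}$ whose restrictions to $\cG_x^x$ approximate $a$ in $\ell^1$. It therefore suffices to show that for comeagerly many $x$ and every $f\in\mathcal{D}$,
\[
\|f|_{\cG_x^x}\|_{e,x}\le \|f|_{\cG_x^x}\|_{\cs_r(\cG_x^x)}.
\]
For $f\in\mathcal{D}$, consider the two functions
\[
\phi_f(x)=\|f|_{\cG_x^x}\|_{r}, \qquad \psi_f(x)=\|f|_{\cG_x^x}\|_{e,x}.
\]

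\textbf{Step 2: semicontinuity.} I expect $\psi_f$ to be upper semicontinuous. The natural quantity to use is the norm of the image of $f$ in $\cs_r(\cG)$ modulo the ideal generated by functions vanishing near $x$; more concretely, $\psi_f(x)=\inf_U\|f\cdot(\chi\circ r)\|$ over cutoffs $\chi$ supported near $x$. The idea is that the infimum defining $\|\cdot\|_e$ can be realized by localizing. Hausdorffness ensures that $f|_{\cG_x^x}$ determines the germ of $f$ near the isotropy only up to functions vanishing on $\cG_x^x$, and these can be killed by cutoffs in the range. An infimum of continuous functions of $x$ is upper semicontinuous.

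Separately, $\phi_f(x)$ is lower semicontinuous on a comeager set. By Fell absorption, $\|\lambda_x(f)\|$ dominates $\phi_f(x)$, and $\phi_f(x)$ is the supremum over finitely supported unit vectors in $\ell^2(\cG_x^x)$ of $\|f|_{\cG_x^x}*\xi\|$. Each finite expression varies continuously at points $x$ where the relevant finitely many isotropy elements persist nearby, that is, at points where the isotropy bundle is "continuous". Second countability gives that the points of continuity of $x\mapsto\cG_x^x\in\Sub(\cG)$ form a comeager set $Y$.

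\textbf{Step 3: comparison on a dense set, then Baire.} Next I would show that $\psi_f\le\phi_f$ on a dense set of points, namely points $y$ near which $\cG_y^y$ is locally "as small as possible" (generic, essentially principal along the isotropy interior). At such points the localized groupoid near $y$ behaves like $\cG_y^y\times$(principal part), so the reduced norm of a cutoff $f\cdot(\chi\circ r)$ is close to the reduced group norm. Having this, combine upper semicontinuity of $\psi_f-\phi_f$ on $Y$ with the fact that a semicontinuous function has a comeager set of continuity points. At continuity points $x\in Y$, the inequality $\psi_f\le\phi_f$ passes from the dense set to $x$. Intersecting over the countable family $\mathcal{D}$ gives the theorem.

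\textbf{Main obstacle.} The hard step is the localization estimate in Step 3: showing that at generic points the exotic norm is computed by the reduced group norm. This is exactly where the vanishing of the singular ideal, i.e.\ Hausdorffness, enters. My first approach would be to restrict $\lambda_z$ for $z$ near $x$ and decompose $\ell^2(\cG_z)$ into cosets of $\cG_z^z$ intersected with a bisection neighbourhood. On each block, $f\cdot(\chi\circ r)$ then acts as convolution by an element of $\C[\cG_z^z]$ close to $f|_{\cG_x^x}$.
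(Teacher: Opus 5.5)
The paper does not prove this statement. It imports it from \cite[Theorem 5.1]{ChristensenNeshveyev2}, using only that the singular ideal vanishes for Hausdorff groupoids. As a self-contained reconstruction, your framework (countable family, semicontinuity, Baire) is sensible, and your Step 2 facts about $\phi_f$ are right. The continuity points of the isotropy map form the invariant comeager $G_\delta$ set $Y=\{x\colon\cG_x^x\subseteq\Iso(\cG)^\circ\}$, where $\Iso(\cG)^\circ$ is the interior of the isotropy bundle, and $\phi_f|_Y$ is lower semicontinuous. (Your formula $\psi_f(x)=\inf_\chi\|f\cdot(\chi\circ r)\|_r$ is wrong as stated: a cutoff in the range does not remove arrows with range near $x$ and source far away. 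This is inessential.)

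The genuine gap is Step 3. It carries the entire content of the theorem, you only offer a heuristic for it, and the heuristic is false. In the groupoid $\cG$ of Section~\ref{sec:counterexample} the whole isotropy bundle is open: a reduced word with equal source and range has a neighbourhood of words with the same $\Z$-labels, and $T$ is free. So $Y=X$ and the isotropy map is continuous everywhere. Yet take $f=\frac14\sum_{s\in\{a^{\pm1},b^{\pm1}\}}1_{V_s}$ and any $\chi\in C(X)$ with $0\le\chi\le1$ and $\chi(x_\infty)=1$. Then $\|(\chi\circ r)f\|_r=1$, whereas $\phi_f(x_\infty)=\sqrt3/2$ by Proposition~\ref{prop:minimal-exotic-fibre}. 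So whatever ``locally as small as possible'' means, no condition on the shape of nearby isotropy groups makes cutoffs approximate the reduced group norm. What is needed is upper semicontinuity of $\phi_f$ at the point. This fails at $x_\infty$ because the nearby groups $H_n$ contain the finite quotients $K_n$, and $\phi_f(x_n)=1$.

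The missing idea is a norm formula that lets Baire category produce the good points directly, with no dense set located in advance.
\begin{enumerate}
\item For $x\in Y$, every $g\in\cG_x^x$ lies in an open bisection contained in $\Iso(\cG)^\circ$. So every $a\in\C[\cG_x^x]$ is the restriction of some $f\in C_c(\cG)$ supported in $\Iso(\cG)^\circ$. A countable $\mathbb{Q}[i]$-span of bump functions on a basis of such bisections can serve as your family in Step 1.
\item For such $f$, the operator $\lambda_z(f)$ leaves each $\ell^2(\cG_z^w)$ invariant and acts there, up to unitary equivalence, as $\lambda_{\cG_w^w}(f|_{\cG_w^w})$.
\item Since $Y$ is dense and invariant and $z\mapsto\|\lambda_z(h)\|$ is lower semicontinuous, for $\chi$ as above with $\chi(x)=1$ this gives
\[
\psi_f(x)\le\|(\chi\circ r)f\|_r=\sup_{w\in Y}\chi(w)\,\phi_f(w),
\qquad\text{hence}\qquad
\psi_f(x)\le\inf_{V\ni x}\,\sup_{w\in V\cap Y}\phi_f(w),
\]
with the infimum over open neighbourhoods $V$ of $x$.
\item Thus $\psi_f(x)=\phi_f(x)$ at every continuity point of the lower semicontinuous function $\phi_f|_Y$ on the Polish space $Y$. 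These points form a comeager subset of $Y$.
\item Intersect over the countable family, and use your $\ell^1$-approximation together with $\|b\|_{e,x}\le\|b\|_{\ell^1}$. This finishes the proof.
\end{enumerate}
Upper semicontinuity of $\psi_f$ is not needed at all.
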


    We end the subsection with a criterion for simplicity of reduced groupoid $\cs$-algebras \cite[Corollary 4.4]{ChristensenNeshveyev2}. The original statement is more general, but reduces to the following if the groupoid is assumed to be Hausdorff.

    \begin{thm}
        Suppose there is a point $x\in X$ such that $\cG_x^x$ is $\cs$-simple and $\cs_e(\cG_x^x)=\cs_r(\cG_x^x)$. Then $\cs_r(\cG)$ is simple.
    \end{thm}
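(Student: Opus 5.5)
The plan is to argue directly with ideals, using the hypothesis at the distinguished point $x$ to transport information between $\cs_r(\cG)$ and $\cs_r(\cG_x^x)=\cs_e(\cG_x^x)$. Fix a nonzero closed ideal $I\subseteq \cs_r(\cG)$; the goal is $I=\cs_r(\cG)$.

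\emph{Step 1: a restriction map.} By the very definition of $\|\cdot\|_{e,x}$, the map $f\mapsto f|_{\cG_x^x}$ from $C_c(\cG)$ to $\C[\cG_x^x]$ is contractive from $\|\cdot\|_r$ to $\|\cdot\|_e$. It therefore extends to a contractive linear map
\[
\Phi_x\colon \cs_r(\cG)\to \cs_e(\cG_x^x)=\cs_r(\cG_x^x).
\]
This map is not multiplicative. I will show that it becomes asymptotically multiplicative after compressing by functions $h\in C(X)$ with $h(x)=1$ and support shrinking to $x$. The reason is as follows. For $a\in \C[\cG_x^x]$, choose a lift $f_a\in C_c(\cG)$. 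In $(f_a*h*i)(g)$ with $g\in\cG_x^x$, the only arrows $k\in\cG^x$ contributing are those in $\operatorname{supp} f_a$, and there are finitely many of them since $\cG^x$ is discrete. Those with $s(k)\neq x$ are killed once $\operatorname{supp}h$ is small. Hence $\Phi_x(f_a*h_n*i)\to a\,\Phi_x(i)$ for $i\in C_c(\cG)$, and I expect this to pass to norm limits. Consequently the closed linear span $J$ of $\Phi_x(I)$, taken after such compressions, is a two-sided ideal of $\cs_r(\cG_x^x)$.

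\emph{Step 2: $J\neq 0$.} Let $E\colon \cs_r(\cG)\to C(X)$ be the faithful conditional expectation. Take $0\neq i\in I$; then $E(i^*i)$ is a nonzero positive function. Conjugating by elements $1_W$ for open bisections $W$, and using minimality (the orbit of any point of the open set $\{E(i^*i)>0\}$ meets every neighbourhood of $x$), I obtain $j\in I$ with $E(j)(x)>0$. Then $\Phi_x(j)$ has nonzero coefficient at the unit of $\cG_x^x$, so $J\neq0$. Since $\cG_x^x$ is $\cs$-simple, $J=\cs_r(\cG_x^x)=\cs_e(\cG_x^x)$.

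\emph{Step 3: from $J$ everything back to $I$ everything.} This is the step I expect to be the main obstacle. From $J=\cs_e(\cG_x^x)$ we get $i\in I$ with $\|\Phi_x(i)-1\|_e<1/2$. By the definition of the exotic norm as an infimum, there is $f\in C_c(\cG)$ with $\|f\|_r<1/2$ and $f|_{\cG_x^x}=\Phi_x(i)-1$. Then $k\defeq i-f$ lies within $1/2$ of $I$ and restricts to $\delta_x$ on $\cG_x^x$.

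I would then compress by $h\in C(X)$ supported near $x$. Using Hausdorffness (so that $\cG^{(0)}$ is clopen and the non-unit part of $\operatorname{supp}k$ near $x$ can be cut away), I aim to show that $hkh$ is close to $h^2$. This should give a positive element of $I$ that is invertible in the corner $h\,\cs_r(\cG)h$; equivalently, the image of $h^2$ in $\cs_r(\cG)/I$ has norm less than $1$ for a suitable $h$. Iterating this argument inside the quotient should force the image of $h^2$ to vanish, i.e.\ $h^2\in I$ for some $h\in C(X)_+$ with $h(x)>0$.

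Finally, minimality together with compactness of $X$ gives finitely many bisections $W_1,\dots,W_n$ such that $\sum_j 1_{W_j}h^21_{W_j}^*\in I\cap C(X)$ is strictly positive on $X$. That element is invertible, so $I=\cs_r(\cG)$.

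The delicate points are controlling norms rather than pointwise values in Step 1, and making the approximation in Step 3 genuinely norm-small. If the direct approach fails, my fallback is to pass to the quotient $\cs_r(\cG)/I$ and compare, via the infimum formula, the exotic norm at $x$ with the norm of the compressed image in that quotient.
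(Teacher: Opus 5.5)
\paragraph{Comparison with the paper.} The paper gives no argument of its own for this statement. It quotes Corollary 4.4 of Christensen and Neshveyev's second paper, where the result comes out of their general machinery for essential algebras of possibly non-Hausdorff groupoids: the restriction maps $\theta_{x,e}$, and the factorisation through them of states that restrict to a point evaluation on $C(X)$ (the tool the paper itself uses in the last section). Your route is different and self-contained. You push a nonzero ideal $I$ down to an ideal of $\cs_e(\cG_x^x)$, use simplicity there, lift $\delta_x$ back with small error, and compress near $x$. This works in the present Hausdorff setting. It uses only that $\cs_e(\cG_x^x)$ is simple, which is equivalent to the hypothesis, and it shows exactly where Hausdorffness enters. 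What the cited approach buys in exchange is generality: non-Hausdorff groupoids and essential algebras.

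\paragraph{Repairs needed.} Several steps need repair, though none is a real gap.
\begin{enumerate}
\item \emph{Step~1 is exact, not asymptotic.} Take $h\in C(X)$ with $h(x)=1$ vanishing at the finitely many points $s(k)\neq x$ with $k\in\operatorname{supp}f_a\cap\cG^x$. Then $\Phi_x(f_a*h*i)=a\,\Phi_x(i)$ holds exactly for $i\in C_c(\cG)$, hence for all $i\in\cs_r(\cG)$ by continuity; the same works on the right. So $\overline{\Phi_x(I)}$ is already a closed two-sided ideal.
\item \emph{Step~2.} Use density of the orbit of $x$ itself to get an arrow from $x$ into $\{E(i^*i)>0\}$. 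This gives $E(j)(x)>0$ at $x$, not merely at nearby points. Also replace $1_W$ by bump functions in $C_c(W)$.
\item \emph{Step~3, the lift.} $\Phi_x(i)$ need not lie in $\C[\cG_x^x]$, so the infimum formula cannot be applied to $\Phi_x(i)-1$. First replace $i$ by a nearby $i'\in C_c(\cG)$, then lift $\Phi_x(i')-1$. This gives $k\in C_c(\cG)$ with $k|_{\cG_x^x}=\delta_x$ and $\operatorname{dist}(k,I)<1/2$.
\item \emph{Step~3, the estimate $\|hkh-h^2\|_r\to 0$.} Nothing is cut away here. Write $k=k|_X+k'$ with $k'=k\cdot 1_{\cG\setminus X}\in C_c(\cG)$; this is where $X$ being clopen is used. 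Since $k'$ vanishes on $\cG_x^x$, compactness of $\operatorname{supp}k'$ gives $\sup\{|k'(g)|\colon r(g),s(g)\in U\}\to 0$ as $U$ shrinks to $x$. Covering $\operatorname{supp}k'$ by $N$ open bisections gives $\|hk'h\|_r\le N\|hk'h\|_\infty$ for $0\le h\le 1$. Together with $k(x)=1$, this yields the estimate uniformly over such $h$ supported in $U$.
\item \emph{The ending.} No iteration is needed, and ``invertible in the corner'' does not make sense unless $h$ is a projection. Let $q\colon\cs_r(\cG)\to\cs_r(\cG)/I$ be the quotient map and $Z$ the hull of $I\cap C(X)$. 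Then $C(Z)\cong C(X)/(I\cap C(X))$ embeds isometrically via $q$, so $\|q(h^2)\|=\sup_Z h^2$. Since $\|q(h^2)\|<1=h(x)^2$, we get $x\notin Z$. As $Z$ is closed and $\cG$-invariant, minimality gives $Z=\emptyset$, hence $1\in I$.
\end{enumerate}
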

    
    \subsection{The groupoid-Chabauty space and simplicity criteria}
    Given a countable discrete group $G$, the \emph{Chabauty space} $\Sub(G)$ is the closed subset of $\{0,1\}^G$ consisting of characteristic functions of subgroups of $G$. We equip $\{0,1\}^G$ with the product topology and $\Sub(G)$ with the subspace topology. This makes $\Sub(G)$ a compact metrizable space, called the Chabauty space of the group $G$.
   
    A natural generalization of the Chabauty space to the context of locally compact étale groupoids has been introduced and studied in \cite[Section 7]{KKLRU}. We recall some terminology here, specializing it to the Hausdorff and second countable setting. Denote by $\mathfrak C(\cG)$ the space of closed subsets of $\cG$ (including the empty set) equipped with the \emph{Fell topology}. A subbasis for such a topology is given by the touch-open and miss-compact sets
    \[
        T_U\defeq\{F\in \mathfrak C(\cG)\colon F\cap U\neq\emptyset \}, \qquad M_K\defeq\{F\in \mathfrak C(\cG)\colon F\cap K=\emptyset \},
    \]
    where $U\subseteq \cG$ is open and $K\subseteq \cG$ is compact. The Fell topology turns $\mathfrak C(\cG)$ into a compact metrizable space \cite[Exercise 12.7]{Kechris}. 
    
    Continuity of the range and the source map makes each isotropy group $\cG_x^x$ closed. Moreover, the étale condition combined with second countability shows that each isotropy group is countable and discrete, hence whenever $H\leq \cG_x^x$, the group $H$ is closed in $\cG$, hence is an element of $\mathfrak C(\cG)$. The space $\Sub(\cG)$ of subgroups of isotropy groups is closed \cite[Lemma 7.4]{KKLRU} in the Fell topology, hence it is compact metrizable when equipped with the \emph{groupoid-Chabauty topology}, that is, the restriction of the Fell topology to $\Sub(\cG)$. More concretely, a base for the topology is again given by the touch-open and miss-compact sets
     \begin{equation}\label{eq:touch-open-miss-compact}
         T_U\defeq\{F\in \Sub(\cG)\colon F\cap U\neq\emptyset \}, \qquad M_K\defeq\{F\in \Sub(\cG)\colon F\cap K=\emptyset \},
     \end{equation}
     where $U\subseteq \cG$ is a precompact open bisection and $K\subseteq \cG$ is compact.
     
    It is readily verified that the map $p\colon \Sub(\cG)\to X$ that associates to each subgroup of an isotropy group its unit point is continuous. We sometimes write $H_x$ if we want to stress that $H\leq \cG_x^x$. 

    The groupoid $\cG$ acts on $\Sub(\cG)$ as follows: if $x\in X$, $H\leq \cG_x^x$, and $g\in \cG_x$, then $gHg\inv$ is a subgroup of $\cG_y^y$, where $y=r(g)$. We set $g\cdot H=gHg\inv$.

   We recall two characterizations crucially used in the proof of Theorem \ref{thm:main}. The first is Kennedy's characterization of $\cs$-simplicity \cite[Theorem~1.1]{Kennedy}. 
    
    \begin{thm}\label{thm:kennedy}
    Let $H$ be a discrete group.
    Then $H$ is not $\cs$-simple if and only if there exist an amenable subgroup $K\leq H$ and a finite set $F\subseteq H\setminus\{e\}$ such that
    \[
     hKh\inv\cap F\neq\varnothing
     \qquad\text{for every }h\in H.
    \]
    \end{thm}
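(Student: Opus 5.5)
This statement is \cite[Theorem~1.1]{Kennedy}, and it is used here as a black box. To see why it holds, I would reduce it to the dynamical characterization of Kalantar and Kennedy: $H$ is $\cs$-simple if and only if $H$ acts freely on its Furstenberg boundary $\partial_F H$. Two standard facts about $\partial_F H$ will also be needed.
\begin{itemize}
\item All point stabilizers $H_y$, $y\in\partial_F H$, are amenable.
\item $\partial_F H$ is extremally disconnected. By Frolík's theorem, the fixed point set $\mathrm{Fix}(g)$ of every $g\in H$ is therefore clopen.
\end{itemize}
With these tools, both directions become short compactness arguments, one in $\partial_F H$ and one in $\Sub(H)$.

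\emph{Forward direction.} Assume $H$ is not $\cs$-simple. Then the boundary action is not free, so there is $g\neq e$ with $U\defeq\mathrm{Fix}(g)$ nonempty; by Frolík, $U$ is clopen. The action is minimal and $\partial_F H$ is compact, so finitely many translates cover it: $\partial_F H=h_1U\cup\dots\cup h_nU$. Set
\[
F\defeq\{h_igh_i\inv\colon 1\le i\le n\}\subseteq H\setminus\{e\}.
\]
Fix any $y\in\partial_F H$ and let $K\defeq H_y$, which is amenable. For $h\in H$, choose $i$ with $hy\in h_iU$. Then $h_igh_i\inv$ fixes $hy$, so
\[
h_igh_i\inv\in H_{hy}=hKh\inv,
\]
and hence $hKh\inv\cap F\neq\varnothing$ for every $h\in H$.

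\emph{Backward direction.} Assume $K$ amenable and $F$ finite satisfy $hKh\inv\cap F\neq\varnothing$ for all $h$. The plan is to move $K$ by conjugation until it fixes a boundary point, and to check that the condition survives the limit.
\begin{enumerate}
\item Since $K$ is amenable, it fixes some probability measure $\mu$ on $\partial_F H$.
\item By strong proximality of $\partial_F H$, there are $h_n\in H$ and $y\in\partial_F H$ with $h_n\mu\to\delta_y$ weak$^*$.
\item Since $\Sub(H)$ is compact, after passing to a subsequence we may assume $h_nKh_n\inv\to L$ in the Chabauty topology.
\item Each $l\in L$ lies in $h_nKh_n\inv$ for all large $n$. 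Such $l$ fixes $h_n\mu$ for those $n$, so in the limit $l\delta_y=\delta_y$. Thus $L\le H_y$.
\item The condition passes to limits. The set $\{L'\in\Sub(H)\colon L'\cap F\neq\varnothing\}$ is a finite union of basic clopen sets, hence closed, and it contains every $h_nKh_n\inv$. So $L\cap F\neq\varnothing$.
\end{enumerate}
Any element of $L\cap F$ is a non-identity element fixing $y$. The boundary action is therefore not free, and $H$ is not $\cs$-simple.

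The substantive inputs are the Kalantar--Kennedy freeness criterion and Frolík's theorem. I would quote both rather than reprove them, and I expect them to be the main obstacle if one insisted on a self-contained proof. The rest is the limiting argument in Steps 3--5: the point to check there is that the condition "meets the finite set $F$" is Chabauty-closed. This uses finiteness of $F$ essentially.
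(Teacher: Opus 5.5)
Your argument is a correct proof, up to one technical fix described below. The paper gives no proof of this statement: it quotes \cite[Theorem~1.1]{Kennedy} and uses it as a black box, so there is no proof in the paper to set your forward direction against. Your forward direction is correct as written. Frol\'ik makes $\mathrm{Fix}(g)$ clopen, minimality and compactness give finitely many translates covering $\partial_F H$, and the amenable point stabilizer $H_y$ is the witness. Your backward direction is essentially the argument the paper itself uses in the proof of Lemma~\ref{lem:urs-obstruction}. There one takes an invariant measure for the amenable subgroup, pushes it to a point mass by strong proximality, and checks that a Chabauty limit of conjugates lies in the stabilizer of that point. Closedness of the condition ``meets $F$'' then finishes the argument.

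The fix is that Steps 2 and 3 cannot be done with sequences. Strong proximality only puts some $\delta_y$ in the weak$^*$ closure of $H\mu$. For non-amenable $H$, the boundary $\partial_F H$ is an infinite extremally disconnected compact space, hence not metrizable. So $\mathrm{Prob}(\partial_F H)$, which contains it via point masses, is not metrizable either, and a point of the closure need not be a limit of a sequence $h_n\mu$. Also, $H$ is not assumed countable, so $\Sub(H)$ need not be metrizable, and extracting a convergent subsequence of $h_nKh_n\inv$ is not justified. Replace sequences by nets and subsequences by subnets, as the paper does in Lemma~\ref{lem:urs-obstruction}. Steps 4 and 5 then go through unchanged: Chabauty convergence is pointwise convergence of indicator functions, so each $l\in L$ lies in $h_iKh_i\inv$ eventually along the net. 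The set of subgroups meeting the finite set $F$ is closed, so it contains the limit $L$.
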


    Building on a sufficient criterion of Borys \cite[Corollary 6.6]{Borys}, Kennedy, Kim, Li, Raum and Ursu characterized simplicity of the essential groupoid $C^*$-algebra in terms of the absence of essentially confined amenable sections \cite[Theorem~7.14]{KKLRU}. 
    
    A \emph{section of isotropy groups} is a family $\Lambda=\{H_x\colon x\in \mathfrak X\}$ with $H_x\leq \cG_x^x$ and $\mathfrak X\subseteq X$ dense. The section $\Lambda$ is \emph{amenable} if every $H_x$ is an amenable group, and it is \emph{confined} if there is a point $x\in X$ such that every $H\in \overline{\cG\cdot\Lambda}$ with $p(H)=x$ is not contained in the unit space. That is, setting $Z_x(\Lambda)\defeq\{H\in \overline{\cG\cdot\Lambda}\colon p(H)=x\}$, the section $\Lambda$ is confined if and only if 
    \begin{equation} \label{eq:KKLRU}
        \exists x\in X \forall H\in Z_x(\Lambda),\quad H\not\subseteq X.
    \end{equation}

    We point out that in \cite[Definition 7.1]{KKLRU}, the authors define the notion of \emph{essentially} confined section. The difference is due to the fact that non-Hausdorff groupoids have non-closed unit space. Since we are assuming Hausdorffness throughout the paper, we drop the word ``essentially'' from the definition. Since the essential and reduced groupoid $C^*$-algebras agree for Hausdorff groupoids, their result takes the following form in our setting.
    \begin{thm}\label{thm:KKLRU}
        Let $\cG$ be a second-countable locally compact Hausdorff étale minimal groupoid with compact unit space. Then $\cs_r(\cG)$ is simple if and only if there is no amenable confined section of isotropy groups.
    \end{thm}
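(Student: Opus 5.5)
The statement is essentially \cite[Theorem~7.14]{KKLRU} specialised to our standing assumptions, so the plan is a reduction rather than a new argument. That theorem says: for a (second-countable, locally compact, possibly non-Hausdorff) étale minimal groupoid with compact unit space, the \emph{essential} groupoid $\cs$-algebra $\cs_{\mathrm{ess}}(\cG)$ is simple if and only if there is no amenable \emph{essentially confined} section of isotropy groups. I would check that each ingredient of that statement collapses to the corresponding notion used here once $\cG$ is Hausdorff.

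First I would identify the algebras. $\cs_{\mathrm{ess}}(\cG)$ is the quotient of $\cs_r(\cG)$ by the ideal $J_{\mathrm{sing}}$ of singular elements, those $a$ whose associated function has support with empty interior. For Hausdorff étale $\cG$, elements of $\cs_r(\cG)$ are genuine continuous functions on $\cG$ (via the map $j\colon \cs_r(\cG)\to C_0(\cG)$). A nonzero continuous function has nonempty open support, so $J_{\mathrm{sing}}=0$ and $\cs_{\mathrm{ess}}(\cG)=\cs_r(\cG)$; this was already used above in quoting \cite[Theorem 5.1]{ChristensenNeshveyev2}.

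Next I would identify the section notions. In \cite[Section 7]{KKLRU} sections are families $\{H_x\}_{x\in\mathfrak X}$ indexed by a dense set, and the orbit closure is taken in $\Sub(\cG)$ with the topology inherited from the Fell topology on closed subsets. I would check that when $\cG$ is Hausdorff every subgroup of $\cG_x^x$ is closed, as noted before the statement. Consequently $\Sub(\cG)$ and its topology agree with those of \cite{KKLRU}, and the basis \eqref{eq:touch-open-miss-compact} is the one they use. Essential confinement there demands a point $x$ such that each $H\in Z_x(\Lambda)$ is not contained in $\overline{X}$, or equivalently is not contained in the part of $\cG$ not separated from units. Since $\cG$ is Hausdorff étale, $X$ is clopen in $\cG$, so $\overline{X}=X$ and the condition becomes exactly \eqref{eq:KKLRU}. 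Amenability of a section is defined identically.

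Combining these identifications with \cite[Theorem~7.14]{KKLRU} gives the statement. The only step I expect to require care is the bookkeeping in the last paragraph. One must make sure the hypotheses of \cite[Theorem~7.14]{KKLRU} (minimality, compactness of the unit space, countability assumptions) match ours. One must also confirm that their formulation of ``not contained in the unit space'' for non-Hausdorff groupoids reduces to $H\not\subseteq X$ rather than to some slightly different condition. My first check would be that $X$ is closed in $\cG$, which follows because the unit space of a Hausdorff topological groupoid is the equaliser $\{g\colon r(g)=g\}$ of two continuous maps into a Hausdorff space.
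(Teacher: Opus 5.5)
Your proposal is correct and matches the paper, which gives no separate proof and simply quotes \cite[Theorem~7.14]{KKLRU}. The paper likewise notes that for Hausdorff $\cG$ the essential and reduced algebras coincide, and that, since $X$ is closed, essential confinement reduces to the condition \eqref{eq:KKLRU}; your checks (the singular ideal vanishes, $X$ is clopen, and subgroups of $\cG_x^x$ are closed) supply exactly this bookkeeping.
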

    
   \subsection{The Furstenberg URS}
    Let $H$ be a countable group and let $\partial_F H$ denote its Furstenberg boundary. The \emph{Furstenberg URS} is defined as the set
    \[
    \cA_H=\{H_z\colon z\in\partial_F H\}.
    \]
    
    Le Boudec and Matte Bon showed that every member of $\cA_H$ is amenable, and the conjugation action $H\acts\cA_H$ is a boundary action, see \cite[Theorem~2.16 and Proposition~2.21]{LeBoudecMatteBon}. Moreover, they showed that the Furstenberg URS can be characterized by a universal property. As a consequence, every isomorphism of groups $\varphi\colon H\to H'$ induces a homeomorphism 
    \[
    \varphi_*\colon\Sub(H)\to\Sub(H'),\qquad L\longmapsto\varphi(L),
    \]
    which satisfies $\varphi_*(\cA_H)=\cA_{H'}$.

    \begin{lem}\label{lem:urs-obstruction}
    Let $H$ be a countable discrete group, and let $F\subseteq H\setminus\{e\}$ be finite. Suppose there is an amenable subgroup $K\leq H$ such that
    \[
    hKh\inv \cap F\neq\emptyset \qquad\text{for all $h\in H$}.    
    \]
    Then 
    \[
    A\cap F\neq\varnothing
    \qquad\text{for every }A\in\cA_H.
    \]
    \end{lem}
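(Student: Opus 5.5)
Since $K$ is amenable, $H\acts\partial_F H$ has a fixed point on the space of probability measures restricted to $K$. Concretely, $\partial_F H$ is a compact $K$-space, so $K$ fixes a probability measure $\mu$ on $\partial_F H$. The plan is to extract from $\mu$ a point $z\in\partial_F H$ with $K\leq H_z$, that is, a point fixed by $K$. This cannot be done naively, since an invariant measure need not produce a fixed point. Instead I will use the universal property of the Furstenberg URS, which Le Boudec and Matte Bon phrase via $\cA_H$: for every amenable subgroup $K\leq H$ there is $A\in \cA_H$ with $K\leq A$. Equivalently, $K$ is contained in some stabilizer $H_z$. This is \cite[Proposition~2.21]{LeBoudecMatteBon}, or follows from the fact that $\partial_F H$ is $H$-injective, so the amenable $K$ has a fixed point on $\partial_F H$. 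The argument: $\partial_F H$ is extremally disconnected and the $K$-action extends to a $K$-boundary structure by restriction; since $K$ is amenable, the $K$-Furstenberg boundary is a point, and $H$-injectivity of $C(\partial_F H)$ restricted to $K$ yields a $K$-equivariant character. I will cite this step rather than reprove it.

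Given $z_0$ with $K\leq H_{z_0}$, the hypothesis gives $hH_{z_0}h\inv\cap F\supseteq hKh\inv\cap F\neq\emptyset$ for all $h\in H$. Since $hH_{z_0}h\inv=H_{hz_0}$, we obtain $H_{z}\cap F\neq\emptyset$ for all $z$ in the orbit $Hz_0$. The orbit is dense by minimality of the boundary action. The remaining task is to pass to the closure, which is the main obstacle because stabilizer maps are not continuous in general. Here the description $\cA_H$ as the URS is the right object: the map $z\mapsto H_z$ is continuous on a comeager set, and $\cA_H$ is by definition the closure of the image of these points, equivalently a minimal closed invariant subset of $\Sub(H)$ in the Chabauty topology.

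So I will argue in $\Sub(H)$ directly. The set $C_F=\{L\in\Sub(H)\colon L\cap F\neq\emptyset\}$ is a finite union of the clopen sets $\{L\colon f\in L\}$, hence closed. The orbit closure $\overline{\{hH_{z_0}h\inv\colon h\in H\}}$ lies in $C_F$ and is a closed $H$-invariant set. Since $\cA_H$ is the unique minimal component (a URS), I need $\cA_H\subseteq$ this orbit closure. To get this, I will choose $z_0$ well: pick $A_0\in\cA_H$ with $K\leq A_0$ from the universal property. Then every conjugate $hA_0h\inv$ meets $F$, so the $H$-orbit of $A_0$ lies in $C_F$. Minimality of $H\acts\cA_H$ makes this orbit dense in $\cA_H$, and closedness of $C_F$ yields $\cA_H\subseteq C_F$, which is the claim. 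Phrasing everything with $A_0\in\cA_H$ avoids the continuity issue entirely, so the only cited input is the containment of amenable subgroups in members of $\cA_H$.
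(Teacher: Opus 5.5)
Your argument rests on a step that is false: an amenable subgroup $K\leq H$ need not be contained in any member of $\cA_H$, i.e.\ it need not fix a point of $\partial_F H$.

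\begin{itemize}
\item If $H$ is $\cs$-simple, then $H$ acts freely on $\partial_F H$, so $\cA_H=\{\{e\}\}$. For $H=\mathbb F_2$ the amenable subgroup $\langle a\rangle$ lies in no member of $\cA_H$.
\item The lemma's hypothesis does not rescue this. Take $H=\mathbb F_2\times\Z/2$, $K=\langle a\rangle\times\Z/2$ and $F=\{(e,t)\}$, with $t$ the generator of $\Z/2$. Every conjugate of $K$ contains the central element $(e,t)$. But $\partial_F H=\partial_F\mathbb F_2$ with $\Z/2$ acting trivially, so $\cA_H=\{\{e\}\times\Z/2\}$, and $K$ is contained in none of its members.
\item Your injectivity sketch fails for the same reason. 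Amenability of $K$ only gives a $K$-invariant state on $C(\partial_F H)$, namely the measure $\mu$ you started with, and nothing upgrades it to a $K$-invariant character.
\end{itemize}

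The universal property of Le Boudec and Matte Bon is about amenable \emph{URSs}: some member of an amenable URS is contained in some member of $\cA_H$. It says nothing about individual amenable subgroups. The orbit closure of $K$ can contain much smaller subgroups, such as $\{e\}$ in the $\mathbb F_2$ example, and only such limits are guaranteed to sit inside stabilizers.

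The missing idea is to run the fixed-point argument on a limit of conjugates of $K$ rather than on $K$ itself.

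\begin{enumerate}
\item Let $Q=\overline{\{hKh\inv\colon h\in H\}}$. As you noted, $Q$ is closed, $H$-invariant, and contained in the closed set of subgroups meeting $F$ (your $C_F$).
\item Take a $K$-invariant probability measure $\mu$ on $\partial_F H$. By strong proximality there are $g_i\in H$ and $z\in\partial_F H$ with $g_i\mu\to\delta_z$. After passing to a subnet, $g_iKg_i\inv\to K_1\in Q$.
\item If $k\in K_1$, then $k\in g_iKg_i\inv$ eventually, so $kg_i\mu=g_i\mu$ eventually. Taking limits gives $\delta_{kz}=\delta_z$, hence $K_1\leq H_z$.
\item Since $Q$ is invariant and contained in your $C_F$, every conjugate of $K_1$ meets $F$. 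Hence every $hH_zh\inv$ meets $F$.
\item Your closing argument now finishes the proof: the orbit of $H_z$ is dense in $\cA_H$, and $C_F$ is closed.
\end{enumerate}

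This is essentially the paper's proof. The paper applies the measure argument to an arbitrary $K_0\in Q$, which is amenable because amenability is closed in $\Sub(H)$. It then reaches each $A\in\cA_H$ by taking a cluster point of $h_iK_1h_i\inv$ lying below $A$, using that inclusion is a closed relation.
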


    \begin{proof}
         Denote by $Q_F$ the set of subgroups of $H$ that intersect $F$. Then $Q_F$ is closed in the Chabauty topology and $Q\defeq\overline{\{gKg\inv\colon g\in H\}}\subseteq Q_F$. The set $Q$ is closed, $H$-invariant, and all its elements are amenable subgroups of $H$, as amenability is a closed condition in $\Sub(H)$. Let $K_0\in Q$. Then $K_0\acts \partial_FH$ and is amenable, therefore there is a $K_0$-invariant probability measure $\mu$ on  $\partial_FH$. By strong proximality there is a net $(g_i)_i\subseteq H$ and an element $z\in \partial_FH$ such that $g_i\mu\to\delta_z$. Consider the net $g_iK_0g_i\inv$ in $Q$. By compactness, up to passing to a subnet we can assume that $g_iK_0g_i\inv\to K_1\in Q$.

        We claim that $K_1\leq H_z$. Indeed, by definition of the Chabauty topology if $k\in K_1$, then $k\in g_iK_0g_i\inv$ eventually, and therefore $k(g_i\mu)=g_i\mu$. Since the left hand side converges to $\delta_{kz}$ while the right hand side converges to $\delta_z$, it follows that $kz=z$.
        
        Let $A\in \mathcal{A}_H$. Since the action $H\acts \mathcal{A}_H$ is minimal and $H_z\in \cA_H$, there is a net $(h_i)\subseteq H$ such that $h_iH_zh_i\inv\to A$. Since $(h_iK_1h_i\inv)$ is a net in $Q$, up to passing to a subnet we can assume that $h_iK_1h_i\inv\to K_2$ for some $K_2\in Q$. Since the subgroup relation is closed, $K_2\leq A$. Moreover $K_2\cap F\neq \emptyset$ since $K_2\in Q_F$, hence $A\cap F\neq\emptyset$.
    \end{proof}

   \subsection{Descriptive set theory}
   Descriptive set theory studies the topological complexity of subsets of Polish spaces, that is, completely metrizable and separable spaces. 

   \begin{defn}
       Let $Y$ be a Polish space. The \emph{Borel $\sigma$-algebra} is the smallest $\sigma$-algebra containing all open subsets of $Y$. A \emph{Borel set} is an element of the Borel $\sigma$-algebra. A map between Polish spaces $f\colon Z\to Y$ is \emph{Borel} if the inverse image of any Borel set is Borel. A subset $A$ of $Y$ is \emph{analytic} if there exist a Polish space $Z$, a Borel subset $B\subseteq Z$, and a Borel map $f\colon Z\to Y$ such that $A=f(B)$.
   \end{defn}
   
   A common theme in descriptive set theory is that sets of low complexity have some interesting structural properties. The relevant property for the current paper is the \emph{Baire property}.

   \begin{defn}
       Let $Y$ be a Polish space. A subset of $Y$ is \emph{meager} if it is the union of sets whose closure has empty interior. A set is \emph{comeager} if its complement is meager. A subset of $Y$ has the \emph{Baire property} if it can be written as a symmetric difference of an open and a meager set.
   \end{defn}

   If $B\subseteq Y$ has the Baire property, then $B=U\Delta M$ with $U$ open and $M$ meager. If in addition $B$ is not meager, then the open set $U$ is nonempty. Moreover, $B$ is comeager in $U$, that is, $B\cap U$ is comeager as a subset of $U$. 
   
   It is a standard fact that analytic subsets of Polish spaces have the Baire property, see \cite[Theorem 21.6]{Kechris}. It is therefore desirable to have tools to control complexity of subsets. The most relevant here will be the Lusin-Novikov Theorem \cite[Theorem 18.10]{Kechris}.  

   \begin{thm}
       Let $X$ and $Y$ be Polish spaces, and $R\subseteq X\times Y$ be Borel. If all the fibers $R_x=\{y\in Y\colon (x,y)\in R\}$ are countable, then there are a sequence $(A_n)_{n\in \N}$ of Borel subsets of $X$ and a sequence of Borel maps $f_n\colon A_n\to Y$ such that $(x,y)\in R$ if and only if there is $n\in \N$ such that $x\in A_n$ and $y=f_n(x)$.
   \end{thm}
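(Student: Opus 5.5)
This is the classical Lusin--Novikov uniformization theorem, and the paper only cites it from \cite[Theorem 18.10]{Kechris}. If I had to reprove it, I would use a Cantor--Bendixson exhaustion of the fibers and reduce every measurability question to Lusin's theorem on Borel sets with singleton sections.

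The first step is to simplify the topology. I would refine the Polish topology on $Y$, and if needed on $X$, without changing the Borel structures, so that $R$ becomes closed in $X\times Y$. Indeed, any Borel subset of a product can be made clopen in a finer Polish topology, and a refinement of $X$ and $Y$ separately suffices after coding $R$ as a continuous injective image of a closed subset of $X\times\N^\N$. After this, every fiber $R_x$ is a closed countable subset of $Y$. By the Baire category theorem, every nonempty closed countable set has an isolated point. Now fix a countable basis $(V_k)_k$ of $Y$ and set
\[
R_k\defeq\{(x,y)\in R\colon y\in V_k,\ R_x\cap V_k=\{y\}\}.
\]
Each $R_k$ has at most one point in every fiber, so it is the graph of a partial function $f_k$ on $A_k\defeq \mathrm{proj}_X(R_k)$. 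By construction, the union of the $R_k$ consists exactly of the pairs $(x,y)$ with $y$ isolated in $R_x$.

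The second step is to show that $R_k$ and $A_k$ are Borel and that $f_k$ is Borel. The set $R_k$ is, a priori, coanalytic. I would get Borelness from Lusin's unicity theorem: a Borel set all of whose sections have at most one point has Borel projection. The argument is that such a projection is both analytic and coanalytic, hence Borel by Suslin's theorem. I would apply this to the analytic set $R\cap(X\times V_k)$ and the coanalytic condition $|R_x\cap V_k|\le 1$, separating the relevant analytic sets by Borel sets. Once $R_k$ is known to be Borel, its graph being Borel gives that $f_k$ is Borel.

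The third step handles the points that are not isolated. I would iterate the construction along the Cantor--Bendixson derivative. Let $R^{(1)}=R\setminus\bigcup_k R_k$, whose fibers are $(R_x)'$, and continue transfinitely, taking intersections at limit stages. Since each $R_x$ is countable and closed, it is scattered, so $R_x^{(\alpha)}=\emptyset$ for some countable ordinal $\alpha$. Every point of $R$ is then isolated at some stage and is captured by one of the countably many Borel graphs produced along the way.

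The main obstacle is that the stage at which the process terminates must be bounded by a single countable ordinal, uniformly in $x$. Without this, the family of graphs would not be countable. I would attack this with the boundedness theorem for $\Pi^1_1$-ranks: the map $x\mapsto$ (Cantor--Bendixson rank of $R_x$) is a coanalytic rank on a Borel set, and a rank on a Borel set is bounded below $\omega_1$. Each successor stage stays Borel by the second step, and each limit stage stays Borel because it is a countable intersection. Hence the whole construction yields countably many Borel sets $A_n$ and Borel maps $f_n$ as in the statement.
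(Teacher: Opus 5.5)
The paper does not prove this statement; it quotes it from \cite[Theorem 18.10]{Kechris}. Your sketch therefore has to stand on its own, and it has a genuine gap exactly where the difficulty of the theorem lies: the Borelness of $R_k$ in your second step. Lusin's unicity theorem needs a set that is Borel \emph{and} has at most one point in each section. Here $R\cap(X\times V_k)$ is Borel but has many points per section, while $R_k$ has at most one point per section but is only known to be coanalytic. A separation argument cannot bridge this: separating $R_k$ from its analytic complement in $R\cap(X\times V_k)$ by a Borel set would already require $R_k$ to be analytic.

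What the step needs is that $S_k=\{x\colon |R_x\cap V_k|\geq 2\}$ is Borel, since $R_k=\bigl(R\cap(X\times V_k)\bigr)\setminus(S_k\times Y)$. Now $S_k$ is the union, over pairs of disjoint basic open sets $V_j,V_l\subseteq V_k$, of $\mathrm{proj}_X(R\cap(X\times V_j))\cap\mathrm{proj}_X(R\cap(X\times V_l))$. So you need that projections of Borel sets with countable sections are Borel. That is Lusin's theorem, the genuinely hard part of Lusin--Novikov. It is usually obtained as a consequence of the very statement you are proving, or from new input such as the effective fact that every point of a countable $\Sigma^1_1(x)$ set is $\Delta^1_1(x)$. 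It does not follow from the unicity theorem, so the second step is circular as written.

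The third step inherits this gap, since every successor stage reruns the second step for $R^{(\alpha)}$. The appeal to boundedness is also unsupported: you give no reason why $x\mapsto$ (Cantor--Bendixson rank of $R_x$) is a $\Pi^1_1$-rank on $X$. The natural route composes $x\mapsto R_x$, viewed as a map into the Effros Borel space $F(Y)$, with a $\Pi^1_1$-rank on countable closed sets, and it fails twice:
\begin{enumerate}
\item The map $x\mapsto R_x$ is Borel only if every $\{x\colon R_x\cap V\neq\emptyset\}$ with $V$ open is Borel, which is Lusin's theorem again.
\item For non-compact $Y$ such as the $\N^\N$ produced by your coding, the Cantor--Bendixson rank is not a $\Pi^1_1$-rank on countable closed sets at all, because $\{F\in F(\N^\N)\colon F'=\emptyset\}$ is $\Pi^1_1$-complete. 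To see this, send a tree $U$ to the body of the pruned tree with nodes $\hat u$ and $\hat u^\frown 0^k$ for $u\in U$, where $\hat u$ adds $1$ to every entry. This body is discrete exactly when $U$ is well-founded.
\end{enumerate}

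Finally, the first step is misstated: separate refinements of $X$ and $Y$ cannot make a Borel set closed in general. Take $R=\{(x,y)\in\mathbb R^2\colon x-y\in\mathbb Q\}$. Every Borel rectangle $U\times V$ disjoint from $R$ has a meager side, since otherwise $U-V$ has interior by Piccard's theorem and meets $\mathbb Q$. Hence countably many such rectangles cannot cover the complement of $R$. The coding you allude to is a legitimate reduction, so this point is only cosmetic: replace $R$ by the closed set $\{(x,z)\in X\times\N^\N\colon z\in F,\ \mathrm{proj}_X(\phi(z))=x\}$, where $\phi\colon F\to R$ is a continuous bijection from a closed $F\subseteq\N^\N$.
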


   As an easy consequence, the image of a Borel set under a countable-to-one Borel map is Borel \cite[Exercise 18.14]{Kechris}.

\section{Descriptive complexity of useful subsets}
    For a compact subset $F\subseteq \cG\setminus X$, set 
    \begin{equation}\label{eq:BF}
    B_F=\left\{x\in X\colon
    \begin{array}{l}
    \text{there exists an amenable }K\leq\cG_x^x \text{ such that}\\[2pt]
     hKh\inv\cap F\neq\emptyset\text{ for every }h\in \cG_x^x
    \end{array}
    \right\}.
    \end{equation}
    Kennedy's criterion \ref{thm:kennedy} will allow us to write the set of points with non $\cs$-simple isotropy as a subset of a countable union of sets of the form $B_F$. The main goal of this section is to show that each $B_F$ is analytic in $X$, hence it has the Baire property.

    Consider the \emph{isotropy map} $\Iso\colon X\to \Sub(\cG)$ that sends $x$ to its isotropy group $\cG_x^x$. Then $B_F=\Iso\inv(C_F)$, where 
    \begin{equation}\label{eq:CF}
    C_F=\left\{H\in \Sub(\cG)\colon
    \begin{array}{l}
    \text{there exists an amenable }K\leq H \text{ such that}\\[2pt]
     hKh\inv\cap F\neq\emptyset\text{ for every }h\in H
    \end{array}
    \right\}.
    \end{equation}

    \begin{lem}\label{lem:Isoborel}
        The map $\Iso\colon X\to \Sub(\cG)$ is Borel.
    \end{lem}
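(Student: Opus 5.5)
Since $\Sub(\cG)$ is compact metrizable and its topology has the subbasis of touch-open sets $T_U$ and miss-compact sets $M_K$ from \eqref{eq:touch-open-miss-compact}, the Borel $\sigma$-algebra of $\Sub(\cG)$ is generated by these sets. Because $\cG$ is second-countable, we can restrict to countably many of them: fix a countable basis $\{U_n\}$ of precompact open bisections. Every open subset of $\cG$ is a countable union of $U_n$'s, and every compact $K$ can be approximated by a decreasing sequence of finite unions of closures $\overline{U_n}$. Then $\{F\colon F\cap K=\emptyset\}$ is the union over these approximants of the corresponding miss sets, because a closed set missing $K$ misses some neighbourhood of $K$ by compactness. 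So it suffices to show that $\Iso\inv(T_U)$ is Borel for every open bisection $U$ and $\Iso\inv(M_K)$ is Borel for every compact $K$. In fact, since $M_K=\bigcap_n M_{K_n}$ complements can be written via touch sets of closed sets, it is enough to handle the sets $\{x\colon \cG_x^x\cap U\neq\emptyset\}$ for $U$ open and $\{x\colon \cG_x^x\cap C\neq \emptyset\}$ for $C$ closed (or compact).

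The key computation is the following. For any subset $A\subseteq\cG$,
\[
\Iso\inv(\{F\colon F\cap A\neq\emptyset\})=\{x\in X\colon \exists g\in A,\ s(g)=r(g)=x\}=s(A\cap \Iso(\cG)),
\]
where $\Iso(\cG)=\{g\colon s(g)=r(g)\}$ is the isotropy bundle. This bundle is closed in $\cG$ because $X$ is Hausdorff and $s,r$ are continuous. I would proceed in two cases.

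\emph{Compact $K$.} $K\cap\Iso(\cG)$ is compact, so its image under the continuous map $s$ is compact, hence closed. Therefore $\Iso\inv(T_K)$ is closed, and $\Iso\inv(M_K)$ is open.

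\emph{Open $U$.} Write $U$ as a countable union of compact sets $K_m$, which is possible since $\cG$ is locally compact and second-countable. Then $\Iso\inv(T_U)=\bigcup_m s(K_m\cap\Iso(\cG))$ is $F_\sigma$, hence Borel.

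Since the preimages of all subbasic sets are Borel and the subbasis generates the Borel $\sigma$-algebra (countably many suffice by second countability), $\Iso$ is Borel. Alternatively, one could observe that the map is upper semicontinuous in the Fell sense and invoke \cite[Theorem 12.?]{Kechris}, but the direct argument avoids this.

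The main obstacle I expect is the reduction step: checking that the Borel $\sigma$-algebra on $\Sub(\cG)$ is generated by the countably many sets $T_{U_n}$ alone. Since each $M_K$ is a countable intersection of complements of touch sets of open neighbourhoods of $K$, the $T_{U_n}$ do suffice. This is the standard Effros Borel structure fact \cite[Section 12.C]{Kechris}, which I would cite rather than reprove.
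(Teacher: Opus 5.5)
Your proof is correct and follows the paper's outline. Both reduce to the subbasic sets $T_U$ and $M_K$, use the identity $\Iso^{-1}(T_A)=s(A\cap\Iso(\cG))$ with $\Iso(\cG)$ the closed isotropy bundle, and treat compact $K$ identically: $s(K\cap\Iso(\cG))$ is compact, so $\Iso^{-1}(M_K)$ is open.

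The genuine difference is the open case. The paper notes that $\Iso(\cG)\cap U$ is Borel and that $s$ is countable-to-one (by étaleness and second countability), and then invokes the Lusin--Novikov theorem to conclude that $s(\Iso(\cG)\cap U)$ is Borel. You instead write $U=\bigcup_m K_m$ with $K_m$ compact and reuse the compact case, so $\Iso^{-1}(T_U)=\bigcup_m s(K_m\cap\Iso(\cG))$ is $F_\sigma$. Your route is more elementary and never uses étaleness or countability of the fibres of $s$, only that $\cG$ is locally compact, second-countable and Hausdorff with $s,r$ continuous. It also gives a sharper conclusion: every open subset of $\Sub(\cG)$ has an $F_\sigma$ preimage under $\Iso$. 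The paper's appeal to Lusin--Novikov matches its later use in Proposition~\ref{prop:amborel}, where it is genuinely needed for the Borel enumeration, but here it is heavier than necessary.

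There is a small slip in your reduction paragraph. You first write $M_K$ correctly as a union of the $M_{K_n}$, but then write $M_K=\bigcap_n M_{K_n}$, and later call $M_K$ a countable intersection of complements of touch sets. Both should be unions. If $V_n\supseteq K$ are decreasing precompact open sets with $\bigcap_n\overline{V_n}=K$, then $T_K=\bigcap_n T_{V_n}$, hence $M_K=\bigcup_n(\Sub(\cG)\setminus T_{V_n})$. This does not affect the argument. The step you flag as the main obstacle is also not one: $\Sub(\cG)$ is second-countable, hence Lindel\"of, so every open set is a countable union of finite intersections of subbasic sets. Checking the preimages of all $T_U$ and $M_K$, which you do directly, therefore suffices; this is exactly the paper's reduction.
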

   \begin{proof}
       It suffices to check that the inverse image of the generators of the groupoid-Chabauty topology described in \eqref{eq:touch-open-miss-compact} is Borel. Notice that, for an open set $U\subseteq \cG$,
       \[
       \Iso\inv(T_U)=s(\Iso(\cG)\cap U).
       \]
       Continuity of the source and range maps implies that $\Iso(\cG)$ is closed, hence $\Iso(\cG)\cap U$ is Borel. Since $\cG$ is étale and second-countable, the map $s$ is countable-to-one, hence the Lusin-Novikov theorem implies that the image $s(\Iso(\cG)\cap U)$ is Borel. Similarly $\Iso\inv(\Sub(\cG)\setminus M_L)=s(\Iso(\cG)\cap L)$ is closed for compact $L\subseteq \cG$. Hence $\Iso\inv( M_L)=X\setminus s(\Iso(\cG)\cap L)$ is open, showing that $\Iso$ is Borel.
   \end{proof}

    \begin{prop}\label{prop:amborel}
        The set $\Am(\cG)\defeq \{K\in \Sub(\cG)\colon K \text{ is amenable}\}$ is Borel in $\Sub(\cG)$.
    \end{prop}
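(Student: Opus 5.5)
Amenability of a countable discrete group $K$ is a countable combination of conditions on finite subsets: $K$ is amenable iff for every finite $E\subseteq K$ and every $n\in\N$ there is a finite nonempty $S\subseteq K$ with $|eS\,\triangle\, S|<|S|/n$ for all $e\in E$ (Følner's condition). Our plan is to encode this condition in Borel terms on $\Sub(\cG)$. The difficulty is that elements of different subgroups live in different isotropy groups. So we quantify over elements of $\cG$ via a countable family of Borel ``coordinate'' maps rather than over abstract group elements.

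\textbf{Step 1 (Borel enumeration).} Fix a countable basis $\{W_m\}_{m\in\N}$ of precompact open bisections of $\cG$. For $H\in\Sub(\cG)$ with $p(H)=x$, the bisection $W_m$ contains at most one point of $\cG_x^x$, and hence at most one point of $H$. We will use the Lusin--Novikov theorem, applied to the closed relation $R=\{(H,g)\in\Sub(\cG)\times\cG\colon g\in H\}$, whose fibres are the countable sets $H$. It gives Borel sets $A_n\subseteq\Sub(\cG)$ and Borel maps $f_n\colon A_n\to\cG$ such that $H=\{f_n(H)\colon H\in A_n\}$. Closedness of $R$ follows from the Fell topology: if $H_i\to H$ and $g_i\to g$ with $g_i\in H_i$, then $g\in H$.

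\textbf{Step 2 (Borel Følner condition).} Set $\tilde f_n(H)=f_n(H)$ for $H\in A_n$ and undefined otherwise. Then $H$ is amenable iff for every finite $E\subseteq\N$ and every $k\in\N$ there exists a finite nonempty $S\subseteq\N$ such that:
\begin{itemize}
\item $H\in A_j$ for all $j\in E\cup S$;
\item with $T=\{\tilde f_j(H)\colon j\in S\}$, we have $|\tilde f_e(H)T\,\triangle\, T|<|T|/k$ for all $e\in E$.
\end{itemize}

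For fixed $E,S,k$ this condition is Borel in $H$. The cardinalities involved are Boolean combinations of the equality relations $f_a(H)=f_b(H)$ and $f_e(H)f_a(H)=f_b(H)$. Each of these is the preimage, under a Borel map into $\cG\times\cG$ or into the composable pairs, of a closed set (the diagonal or the graph of multiplication). Hence each is Borel.

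\textbf{Step 3 (conclusion and main obstacle).} Taking a countable intersection over $(E,k)$ of a countable union over $S$ shows that $\Am(\cG)$ is Borel. The main obstacle is Step 1, namely producing a Borel enumeration of elements that is uniform in $H$. Lusin--Novikov handles this once $R$ is known to be Borel with countable fibres. As a fallback, one can use the bisections $W_m$ directly: the partial map $H\mapsto$ the unique point of $H\cap W_m$ is Borel on the set $T_{W_m}$, by an argument like the one in Lemma \ref{lem:Isoborel}.
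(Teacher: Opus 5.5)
Your route is the paper's: apply Lusin--Novikov to the closed membership relation $R$, then write the F{\o}lner condition as a countable intersection of countable unions of Borel conditions. These conditions are built from the relations $f_a(H)=f_b(H)$ and $f_e(H)f_a(H)=f_b(H)$. However, the characterization in Step 2 is false as stated.

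You quantify over every finite $E\subseteq\N$ and then require $H\in A_j$ for all $j\in E\cup S$. Taking $E=\{j\}$ shows that your right-hand side forces $H\in\bigcap_{j\in\N}A_j$. Lusin--Novikov guarantees nothing of the sort. For instance, you may always take the graphs of the $f_n$ pairwise disjoint. Then a finite subgroup $H$, such as a trivial subgroup $\{x\}$, lies in exactly $|H|$ of the sets $A_n$. Your condition therefore declares it non-amenable, although it is amenable.

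The membership requirement on $E$ has to be a hypothesis rather than part of the conclusion:
\[
\Am(\cG)=\bigcap_{E,k}\Bigl(\bigl(\Sub(\cG)\setminus\bigcap_{e\in E}A_e\bigr)\cup\bigcup_{S}C_{E,S,k}\Bigr),
\]
where $C_{E,S,k}$ is the Borel set of those $H$ satisfying your two bullet conditions. This set is still Borel.

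Alternatively, do what the paper does and extend each $f_n$ to a total Borel map by setting $e_n(H)=p(H)$ for $H\notin A_n$. Since the unit $p(H)$ belongs to $H$, one still has $H=\{e_n(H)\colon n\in\N\}$, and all indices may then range freely over $\N$ with no membership constraints.

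Your fallback enumeration via the bisections $W_m$ needs the same repair, since those maps are also only partially defined. That fallback is in fact continuous, not merely Borel, on the open set $T_{W_m}$: the preimage of an open $V\subseteq\cG$ is $T_{W_m\cap V}$. With the fix, it gives a proof of this proposition that avoids Lusin--Novikov altogether.
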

    \begin{proof}
        Let $R\subseteq \Sub(\cG)\times \cG$ be the set $\{(H,g)\colon g\in H\}$. It is readily checked that $R$ is closed, and since every $H\in \Sub(\cG)$ is countable, all the fibers $R_H=\{h\colon (H,h)\in R\}$ are countable. 
        
        By the Lusin-Novikov theorem, there are Borel subsets $A_n\subseteq \Sub(\cG)$ and Borel maps $f_n\colon A_n\to \cG$ such that $(H,g)\in R$ if and only if there is $n\in \N$ with $H\in A_n$ and $g=f_n(H)$. Each $f_n$ can be extended to a total Borel map $e_n\colon \Sub(\cG)\to \cG$ given by 
        \[
        e_n(H)=\begin{cases}
            f_n(H) \quad \text{if }H\in A_n \\
            p(H)  \quad \text{if }H\notin A_n
        \end{cases},
        \]
        where $p$ is the continuous map associating the unit point to each isotropy subgroup. 
        
        Then for all $H\in \Sub(\cG)$, one has the Borel enumeration $H=\{e_n(H)\colon n\in \N\}$. The group $H$ is amenable if and only if for all $m\in \N_{\geq 1}$ and all nonempty finite subsets $I\subseteq H$ there is a nonempty finite subset $J\subseteq H$ such that for all $g\in I$
        \[
        |gJ\Delta J|< |J|/m.
        \]
        
        Every vector $\mathbf{i}\in \N^p$ induces a finite subset $S_{\mathbf{i}}(H)=\{e_{i_l}(H)\colon l=1,\dots, p\}$. Conversely, for each nonempty finite subset $I\subseteq H$ there is $p\in \N$ and $\mathbf{i}\in \N^p$ such that $I=S_{\mathbf{i}}(H)$. Therefore
        \[
        \Am(\cG)=\bigcap_{m\in \N_{\geq 1}}\bigcap_{p\in \N_{\geq 1}}\bigcap_{\mathbf{i}\in \N^p}\bigcup_{q\in \N_{\geq 1}}\bigcup_{\mathbf{j}\in \N^q} A_{m,p,\mathbf{i},q,\mathbf{j}},
        \]
        where $A_{m,p,\mathbf{i},q,\mathbf{j}}$ is the set of $H\in \Sub(\cG)$ such that 
        \begin{enumerate}
            \item \label{cond:distinct} the elements $e_{j_k}(H)$ are pairwise distinct,
            \item \label{cond:Folner} $|e_{i_l}S_{\mathbf{j}}(H)\Delta S_{\mathbf{j}}(H)|<q/m$ for all $l=1,\dots, p$.
        \end{enumerate}

        Condition \eqref{cond:distinct} can be written as a conjunction of formulas of the type $e_{j_l}(H)\neq e_{j_k}(H)$. Since the maps $e_n$ are Borel, these conditions are all Borel, hence condition \eqref{cond:distinct} is Borel. Condition \eqref{cond:Folner} can be written as a finite disjunction of cardinality equalities, and each cardinality equality can be expressed as a Boolean combination of conditions of the form $e_{i_l}(H) e_{j_k}(H)=e_{j_r}(H)$ or $e_{i_l}(H) e_{j_k}(H)\neq e_{j_r}(H)$. These conditions are all Borel since multiplication is continuous on $\cG^{(2)}$, hence condition \eqref{cond:Folner} is Borel. Therefore, $\Am(\cG)$ is Borel.       
    \end{proof}

    In Section \ref{sec:counterexample}, we will show that $\Am(\cG)$ is not closed in general.

    \begin{thm}\label{thm:BFCFanalytic}
        Let $F\subseteq \cG\setminus X$ be compact. Then the set $B_F$ defined in \eqref{eq:BF} and the set $C_F$ defined in \eqref{eq:CF} are analytic.
    \end{thm}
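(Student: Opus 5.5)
We first show that $C_F$ is analytic by writing it as the projection of a Borel subset of a Polish space, and then deduce the claim for $B_F$ from Lemma \ref{lem:Isoborel}. Consider the set
\[
D_F\defeq\{(H,K)\in \Sub(\cG)\times\Sub(\cG)\colon K\leq H,\ K\in \Am(\cG),\ hKh\inv\cap F\neq\emptyset \text{ for all } h\in H\}.
\]
By construction, $C_F$ is the image of $D_F$ under the continuous projection onto the first coordinate. Once $D_F$ is known to be Borel in the compact metrizable space $\Sub(\cG)\times\Sub(\cG)$, the set $C_F$ is analytic by definition.

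To see that $D_F$ is Borel we treat its three conditions separately.
\begin{enumerate}
\item The relation $K\leq H$ is closed in $\Sub(\cG)\times\Sub(\cG)$, hence Borel. Indeed, if $K_i\to K$, $H_i\to H$ with $K_i\leq H_i$ and $k\in K$, then for a bisection $U\ni k$ the sets $K_i$, and hence $H_i$, eventually meet $U$. Any limit point of these intersection points is $k$, and since the $H_i$ converge to $H$ this forces $k\in H$.
\item The condition $K\in\Am(\cG)$ is Borel by Proposition \ref{prop:amborel}.
\item For the conjugation condition we use the Borel enumeration $H=\{e_n(H)\colon n\in\N\}$ from the proof of Proposition \ref{prop:amborel}. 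The condition becomes
\[
\bigcap_{n\in\N}\{(H,K)\colon e_n(H)Ke_n(H)\inv\in T'_F\},
\]
where $T'_F\defeq\{L\in\Sub(\cG)\colon L\cap F\neq\emptyset\}=\Sub(\cG)\setminus M_F$ is closed.
\end{enumerate}

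It therefore remains to show that the map $(H,K)\mapsto e_n(H)Ke_n(H)\inv$ is Borel on the Borel set $\{K\leq H\}$, where it is defined, since $e_n(H)\in H=\cG_x^x$ with $x=p(H)=p(K)$. Since $e_n$ is Borel, it suffices to show that the conjugation map
\[
\{(g,K)\in\cG\times\Sub(\cG)\colon s(g)=r(g)=p(K)\}\to\Sub(\cG),\qquad (g,K)\mapsto gKg\inv,
\]
is continuous, or at least Borel. This is the step I expect to be the main obstacle, since it requires an honest check of the action of $\cG$ on $\Sub(\cG)$ in the Fell topology.

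For the check, I would take $(g_i,K_i)\to(g,K)$ and choose a bisection $W\ni g$. For the touch-open sets: if $gKg\inv$ meets $U$ at $gkg\inv$, then $K$ meets the open set $W\inv U W$ near $k$. Hence $K_i$ eventually meets it, and by continuity of multiplication $g_iK_ig_i\inv$ eventually meets $U$. Miss-compact sets are handled symmetrically by a compactness and subnet argument, using that limits of $g_ik_ig_i\inv$ lying in a compact set give, after multiplying by $W\inv$ on the left and $W$ on the right, limits of the $k_i$, which must lie in $K$. This is the same argument that makes the action of $\cG$ on $\Sub(\cG)$ continuous in \cite{KKLRU}, so I would cite it there if possible.

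With this in place, the set in item 3 is a countable intersection of preimages of the closed set $T'_F$ under Borel maps, hence Borel. Therefore $D_F$ is Borel and $C_F$ is analytic. Finally, $B_F=\Iso\inv(C_F)$, and preimages of analytic sets under Borel maps are analytic. Indeed, $\Iso\inv(C_F)$ is the projection to $X$ of the set $\{(x,H,K)\colon \Iso(x)=H,\ (H,K)\in D_F\}$, which is Borel because $\Iso$ is Borel by Lemma \ref{lem:Isoborel}. Hence $B_F$ is analytic.
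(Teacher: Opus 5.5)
Your proof is correct. It differs from the paper's in how the conjugation condition is handled.

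\textbf{The paper's route.} The paper does not separate out the condition ``$hKh^{-1}\cap F\neq\emptyset$ for all $h\in H$''. It shows directly, by a sequential argument, that the combined condition ``$K\leq H$ and $hKh^{-1}\cap F\neq\emptyset$ for all $h\in H$'' is \emph{closed} in $\Sub(\cG)^2$. Given $(H_n,K_n)\to(H,K)$ and $h\in H$, it approximates $h$ by elements $h_n\in H_n$. It then picks $f_n\in F$ with $h_n^{-1}f_nh_n\in K_n$, uses compactness of $F$ to pass to $f_n\to f$, and concludes $h^{-1}fh\in K$ via a miss-compact neighbourhood.

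\textbf{Your route.} You reuse the Borel enumeration $H=\{e_n(H)\colon n\in\N\}$ from Proposition~\ref{prop:amborel}. This turns the universal quantifier over $h\in H$ into a countable intersection. You are then left needing only continuity, or even just Borelness, of the conjugation map $(g,K)\mapsto gKg^{-1}$ on $\{s(g)=r(g)=p(K)\}$. Your sketch of that continuity is sound: the bisection trick for touch-open sets, and the subnet argument for miss-compact sets, which works because $k_i=g_i^{-1}(g_ik_ig_i^{-1})g_i$.

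\textbf{Comparison.} The paper's argument is more elementary and gives the stronger conclusion that this part of $P_F$ is closed. It does not need the Borel enumeration or any continuity property of the $\cG$-action on $\Sub(\cG)$. Your argument is more modular and yields only Borelness, which is all that is required. As a side benefit, it uses compactness of $F$ only to know that $\{L\colon L\cap F\neq\emptyset\}$ is Borel. It would therefore go through for any closed $F$, since such an $F$ is a countable union of compact sets. The paper's closedness argument, by contrast, genuinely uses compactness of $F$ to extract the convergent sequence $f_n\to f$.

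Your explicit justification that $B_F=\Iso^{-1}(C_F)$ is analytic fills in a step the paper leaves implicit. You write it as the projection of the Borel set $\{(x,H,K)\colon \Iso(x)=H,\ (H,K)\in D_F\}$.
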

    \begin{proof}
        Since the map $\Iso$ is Borel by Lemma \ref{lem:Isoborel} and $B_F=\Iso\inv(C_F)$, it suffices to show that $C_F$ is analytic.

        Consider the set $P_F\subseteq \Sub(\cG)^2$ consisting of all pairs $(H,K)$ satisfying
        \begin{enumerate}
            \item \label{cond:amenability}$K$ is amenable;
            \item \label{cond:inclusionconjugacy} $K\leq H$ and $hKh\inv\cap F\neq \emptyset$ for all $h\in H$.
        \end{enumerate}

        Condition \eqref{cond:amenability} is Borel by Proposition \ref{prop:amborel}. 
        
        Suppose $(H,K)\in \Sub(\cG)^2$ and $K\not\leq H$. Then there is an element $k\in K\setminus H$. Since $H$ is closed, there is a precompact open bisection $W$ containing $k$ and such that $H\cap \overline{W}=\emptyset$. Therefore $(H,K)\in M_{\overline{W}}\times T_W$, and for all $(H',K')\in M_{\overline{W}}\times T_W$, we have $K'\not\leq H'$, hence the relation $K\leq H$ is closed. 
        
        Suppose now $(H_n, K_n)$ is a sequence in $\Sub(\cG)^2$ satisfying condition \eqref{cond:inclusionconjugacy} and converging to $(H,K)$ in the groupoid-Chabauty topology. By the previous observation $K\leq H$. Let $h\in H$ and let $(U_m)_{m\in\N}$ be a decreasing local basis of open bisections containing $h$. For every $m\in \N$, eventually $H_n\in T_{U_m}$, hence there is $n_m\in \N$ and $h_m\in H_{n_m}\cap U_m$. Therefore, up to passing to a subsequence we can find a sequence of elements $h_n\in H_n$ such that $h_n\to h$ in the groupoid topology. Since $K_n\cap h_n\inv Fh_n\neq \emptyset$, there is a sequence $f_n$ of elements of $F$ such that $h_n\inv f_nh_n\in K_n$. By compactness of $F$ we can assume, again after passing to a subsequence, that $f_n\to f$ for some $f\in F$. We claim that $h\inv f h\in K$. Indeed, otherwise there would be a precompact open bisection $V$ containing $h\inv f h$ such that $\overline{V}\cap K=\emptyset$. Then $K_n\in M_{\overline{V}}$ eventually, hence $K_n\cap V=\emptyset$, which contradicts the fact that $h_n\inv f_n h_n\in K_n$ and $h_n\inv f_n h_n\in V$ eventually. This shows that $hKh\inv\cap F\neq \emptyset$, and hence, since $h\in H$ was arbitrary, that $(H,K)$ also satisfies condition \eqref{cond:inclusionconjugacy}.

        Conditions \eqref{cond:amenability} and \eqref{cond:inclusionconjugacy} are Borel, hence $P_F$ is Borel. Since $C_F$ is the projection of $P_F$ on the first component, the set $C_F$ is analytic, concluding the proof. 
    \end{proof}

\section{Proof of the main theorem}

    Let $B=\{x\in X\colon \cG_x^x \text{ is not $\cs$-simple} \}$. To show meagerness of $B$ we will show it is contained in a union of countably many sets $B_F$, and that each of those is meager.

    Let $W\subseteq\cG$ be a precompact open bisection and let $D\subseteq s(W)$ be compact. For $z\in s(W)$, let $w_z\in W$ be the unique arrow with source $z$. If $F\subseteq\cG\setminus X$ is compact, define
    \begin{equation}\label{eq:local-Ad}
    \Ad_{W,D}(F)=\{w_zfw_z^{-1}\colon z\in D,\ f\in F\cap\cG_z^z\}.
    \end{equation}
    
    \begin{lem}\label{lem:local-equivariance}
    The set $\Ad_{W,D}(F)$ is a compact subset of $\cG\setminus X$, and
    \[
    \theta_W(D\cap B_F)\subseteq B_{\Ad_{W,D}(F)}.
    \]
    \end{lem}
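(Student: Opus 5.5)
The argument splits into two parts: compactness of $\Ad_{W,D}(F)$, and the inclusion $\theta_W(D\cap B_F)\subseteq B_{\Ad_{W,D}(F)}$.

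\emph{Compactness.} I would first write $\Ad_{W,D}(F)$ as a continuous image of a compact set. Let $\sigma\colon s(W)\to W$ be the inverse of the homeomorphism $s|_W$, so that $\sigma(z)=w_z$ and $\sigma$ is continuous. Consider
\[
E=\{(z,f)\in D\times F\colon s(f)=r(f)=z\}.
\]
This set is closed in the compact set $D\times F$, since $s$ and $r$ are continuous and $X$ is Hausdorff, so $E$ is compact. The map $\Phi(z,f)=\sigma(z)f\sigma(z)\inv$ is continuous on $E$: composability holds there, because $s(\sigma(z))=z=r(f)$, and multiplication and inversion are continuous on $\cG^{(2)}$. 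Hence $\Ad_{W,D}(F)=\Phi(E)$ is compact. To see that it avoids $X$, note that $w_zfw_z\inv\in X$ would force $f=w_z\inv w_z=z\in X$, contradicting $F\subseteq\cG\setminus X$.

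\emph{Inclusion.} Let $z\in D\cap B_F$ and put $y=\theta_W(z)=r(w_z)$. Write $w=w_z$. Conjugation $\varphi(h)=whw\inv$ is a group isomorphism $\cG_z^z\to\cG_y^y$. By the definition of $B_F$, there is an amenable $K\leq\cG_z^z$ with $hKh\inv\cap F\neq\emptyset$ for all $h\in\cG_z^z$. Set $K'=\varphi(K)=wKw\inv$, which is amenable, being isomorphic to $K$. Take any $h'\in\cG_y^y$ and write $h'=\varphi(h)$ with $h=w\inv h'w\in\cG_z^z$. Choose $f\in hKh\inv\cap F$. Then $f\in\cG_z^z$, so $f\in F\cap\cG_z^z$, and
\[
h'K'h'^{-1}=w\,hKh\inv\,w\inv\ni wfw\inv,
\]
with $wfw\inv\in\Ad_{W,D}(F)$ because $z\in D$. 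Therefore $h'K'h'^{-1}\cap\Ad_{W,D}(F)\neq\emptyset$ for every $h'\in\cG_y^y$, which shows $y\in B_{\Ad_{W,D}(F)}$.

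The only step needing some care is the compactness: one must check that restricting to the fibers $F\cap\cG_z^z$ still yields a compact domain, and the closedness of $E$ settles this. The inclusion is purely algebraic.
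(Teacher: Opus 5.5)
Your proposal is correct and follows essentially the same route as the paper: compactness comes from the closed subset of $D\times F$ of pairs $(z,f)$ with $f\in\cG_z^z$ and its continuous image under $(z,f)\mapsto w_zfw_z^{-1}$. The inclusion comes from transporting the amenable witness $K$ to $w_zKw_z^{-1}$ and conjugating each $h'\in\cG_y^y$ back to $\cG_z^z$. Your explicit check that $w_zfw_z^{-1}\in X$ forces $f=z$ is just a more detailed version of the paper's remark that conjugation preserves non-unit isotropy arrows.
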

    
    \begin{proof}
    The map $z\mapsto w_z$ is continuous. The set of pairs $(z,f)\in D\times F$ satisfying $f\in\cG_z^z$ is closed and hence compact. Its image under $(z,f)\mapsto w_zfw_z^{-1}$ is therefore compact and coincides with $\Ad_{W,D}(F)$. Conjugation sends a unit to a unit and a nonunit isotropy arrow to a nonunit isotropy arrow, so the image is disjoint from $X$.
    
    Let $x\in D\cap B_F$, put $y=\theta_W(x)$, and let $K\leq\cG_x^x$ witness $x\in B_F$. Then $K'=w_xKw_x^{-1}$ is amenable. Every $h'\in\cG_y^y$ has the form $h'=w_xhw_x^{-1}$ for a unique $h\in\cG_x^x$. Choose $f\in hKh^{-1}\cap F$. It follows that
    \[
    w_xfw_x^{-1}\in h'K'(h')^{-1}\cap\Ad_{W,D}(F),
    \]
    so $K'$ witnesses $y\in B_{\Ad_{W,D}(F)}$.
    \end{proof}
 
    \begin{lem}\label{lem:comeager}
        Suppose there is a compact $F\subseteq \cG\setminus X$ such that $B_{F}$ is not meager. Then there is a compact $F_0\subseteq\cG\setminus X$ such that $B_{F_0}$ is comeager in $X$.
    \end{lem}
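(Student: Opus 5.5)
Since $B_F$ is analytic by Theorem~\ref{thm:BFCFanalytic}, it has the Baire property. Being non-meager, it is comeager in some nonempty open set $U\subseteq X$. The plan is to use minimality and compactness of $X$ to move $U$ around by finitely many bisections, and to take $F_0$ to be the union of $F$ with the corresponding conjugated sets $\Ad_{W,D}(F)$ from Lemma~\ref{lem:local-equivariance}. Finite unions of compact subsets of $\cG\setminus X$ are again compact subsets of $\cG\setminus X$. By definition, $B_{F'}\subseteq B_{F''}$ whenever $F'\subseteq F''$, since a witness $K$ for $F'$ is also a witness for $F''$. So it will suffice to cover $X$ by finitely many open sets on each of which some $B_{F_i}$ is comeager, and then take $F_0=\bigcup_i F_i$.

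\textbf{Local step.} Fix $y\in X$. By minimality, the orbit $r(s\inv(y))$ is dense, so it meets $U$; equivalently there is $g\in\cG$ with $r(g)=y$ and $s(g)\in U$. Choose a precompact open bisection $W\ni g$ and shrink it so that $s(W)\subseteq U$. Then pick a compact neighbourhood $D$ of $s(g)$ inside $s(W)$, with interior $D^\circ$. The map $\theta_W$ restricts to a homeomorphism from $D^\circ$ onto the open set $V_y\defeq\theta_W(D^\circ)\ni y$. Because $B_F\cap U$ is comeager in $U$, the set $B_F\cap D^\circ$ is comeager in $D^\circ$, as $D^\circ$ is an open subset of $U$. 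Homeomorphisms preserve comeagerness, so $\theta_W(D^\circ\cap B_F)$ is comeager in $V_y$. Lemma~\ref{lem:local-equivariance} gives $\theta_W(D\cap B_F)\subseteq B_{\Ad_{W,D}(F)}$, hence $B_{F_y}$ is comeager in $V_y$, where $F_y\defeq\Ad_{W,D}(F)$.

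\textbf{Globalization.} By compactness, finitely many of the $V_y$ cover $X$, say $V_{y_1},\dots,V_{y_n}$. Set $F_0=F_{y_1}\cup\dots\cup F_{y_n}$, which is a compact subset of $\cG\setminus X$. By monotonicity, $B_{F_0}$ is comeager in each $V_{y_i}$. The complement $X\setminus B_{F_0}$ is therefore a finite union of sets $(X\setminus B_{F_0})\cap V_{y_i}$, each meager in the open set $V_{y_i}$ and hence meager in $X$. So $B_{F_0}$ is comeager in $X$.

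\textbf{Main obstacle.} The step needing the most care is the local transport: one must check that $\theta_W$ restricted to an open subset of $s(W)$ carries comeager-in-the-domain sets to comeager-in-the-image sets. This holds because $\theta_W$ is a homeomorphism between open subsets of $X$, and meagerness relative to an open subspace agrees with meagerness in $X$ for subsets of that open set. One must also keep $D$ compact, as Lemma~\ref{lem:local-equivariance} requires, while working with its interior, which is why $D$ is chosen as a compact neighbourhood in the locally compact space $s(W)$.
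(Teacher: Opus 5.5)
Your proposal is correct and follows essentially the same route as the paper: minimality moves points of the open set $U$ where $B_F$ is comeager to arbitrary $y$ via bisections, Lemma~\ref{lem:local-equivariance} transports comeagerness, compactness gives finitely many transports, and $F_0$ is the union of the corresponding $\Ad_{W,D}(F)$. The only difference is cosmetic. You take a compact neighbourhood $D$ and work on its interior, whereas the paper takes an open $D$ and applies the lemma to $\overline{D}$; also, the extra $F$ mentioned in your plan is not needed in the final $F_0$.
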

    \begin{proof}
        By Theorem \ref{thm:BFCFanalytic}, $B_{F}$ is analytic, hence it has the Baire property. 
        
        Let $U\subseteq X$ be a nonempty open set in which $B_{F}$ is comeager and let $y\in X$.  By minimality of $\cG$, there is a point $x\in U$ and an element $g\in \cG$ with $s(g)=x$ and $r(g)=y$. Let $W$ be a precompact bisection containing $g$, and let $D$ be an open neighborhood of $x$ whose closure is contained in $U\cap s(W)$. Then $y\in \theta_W(D)$. By compactness there are precompact open bisections $W_j$ and open sets $D_j\subseteq \overline{D_j}\subseteq U\cap s(W_j)$ such that $X=\bigcup_{j=1}^n \theta_{W_j}(D_j)$. 
        
        Fix $j\in\{1,\dots n\}$. Since $B_{F}\cap U$ is comeager in $U$, $B_{F}\cap D_j$ is comeager in $D_j$, hence $\theta_{W_j}(D_j\cap B_{F})$ is comeager in $\theta_{W_j}(D_j)$. By Lemma \ref{lem:local-equivariance}, we have that $\theta_{W_j}(D_j\cap B_{F})\subseteq B_{\Ad_{W_j,\overline{D_j}}(F)}$. Let $F_0=\bigcup_{j=1}^n\Ad_{W_j,\overline{D_j}}(F)$. Then $F_0\subseteq \cG\setminus X$ is compact and $B_{F_0}\supseteq B_{\Ad_{W_j,\overline{D_j}}(F)}\supseteq \theta_{W_j}(D_j\cap B_F)$ is comeager in each $\theta_{W_j}(D_j)$. 
        
        As these open sets form a finite open cover of $X$, the set $B_{F_0}$ is comeager in $X$.
    \end{proof}

    We will also need a groupoid version of Lemma \ref{lem:urs-obstruction}.

    \begin{lem}\label{lem:groupoid-urs-obstruction}
    Let $F\subseteq\cG\setminus X$ be compact and let $x\in B_F$. Then
    \[
    A\cap F\neq\varnothing
    \qquad\text{for every }A\in\cA_{\cG_x^x}.
    \]
    \end{lem}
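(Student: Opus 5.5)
The plan is to reduce directly to Lemma \ref{lem:urs-obstruction}, applied to the countable discrete group $H=\cG_x^x$ and a suitable finite set of nonunit elements.

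First, fix $x\in B_F$ and an amenable $K\leq \cG_x^x$ with $hKh\inv\cap F\neq\emptyset$ for every $h\in\cG_x^x$. Lemma \ref{lem:urs-obstruction} needs a \emph{finite} set. Here the relevant intersection only involves elements of $\cG_x^x$, since $hKh\inv\subseteq \cG_x^x$. So we replace $F$ by
\[
F_x\defeq F\cap \cG_x^x .
\]
Then $hKh\inv\cap F_x\neq\emptyset$ for every $h\in\cG_x^x$, and $F_x\subseteq \cG_x^x\setminus\{x\}$, because $F$ misses the unit space $X$ and the identity of $\cG_x^x$ is the unit $x$.

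Second, show $F_x$ is finite. The isotropy group $\cG_x^x$ is closed in $\cG$, so $F_x$ is a closed subset of the compact set $F$ and hence compact. The fibre $s\inv(x)$ is discrete because $\cG$ is étale: each open bisection meets $s\inv(x)$ in at most one point. A compact discrete set is finite, so $F_x$ is a finite subset of $\cG_x^x\setminus\{e\}$. The group is countable, as noted in the preliminaries.

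Third, apply Lemma \ref{lem:urs-obstruction} with $H=\cG_x^x$, the finite set $F_x$ and the amenable subgroup $K$. This gives $A\cap F_x\neq\emptyset$ for every $A\in\cA_{\cG_x^x}$, and hence $A\cap F\neq\emptyset$.

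No step is a real obstacle. The only point needing care is the finiteness of $F\cap\cG_x^x$, which is exactly where the étale hypothesis and the compactness of $F$ enter. One should also observe that the Chabauty space of $\cG_x^x$ used in Lemma \ref{lem:urs-obstruction} is the abstract one of the discrete group, so no compatibility with the groupoid-Chabauty topology is needed.
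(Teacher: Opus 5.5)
Your proposal is correct and follows essentially the same route as the paper. You replace $F$ by the finite set $F\cap\cG_x^x$, which is finite because $\cG_x^x$ is closed and discrete and $F$ is compact, apply Lemma~\ref{lem:urs-obstruction} with an amenable witness $K$, and conclude from $F\cap\cG_x^x\subseteq F$.
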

    
    \begin{proof}
    Since $\cG_x^x$ is a closed discrete subspace of $\cG$, the set
    \[
    E=F\cap\cG_x^x
    \]
    is finite. Any amenable subgroup witnessing $x\in B_F$ satisfies the hypothesis of Lemma~\ref{lem:urs-obstruction} with this $E$. Since $E\subseteq F$, it follows that $A\cap F\neq \emptyset$.
    \end{proof}

    \begin{prop}\label{prop:BFmeager}
    Let $\cG$ be a second-countable locally compact Hausdorff étale minimal groupoid with compact unit space $X$. If $\cs_r(\cG)$ is simple, then for all compact $F\subseteq\cG\setminus X$ the set $B_F$ is meager.        
    \end{prop}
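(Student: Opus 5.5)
We argue by contradiction: assume $\cs_r(\cG)$ is simple and that $B_F$ is not meager for some compact $F\subseteq\cG\setminus X$. By Lemma \ref{lem:comeager} we may replace $F$ by a compact $F_0\subseteq\cG\setminus X$ with $B_{F_0}$ comeager in $X$. The aim is to build an amenable confined section of isotropy groups, which contradicts Theorem \ref{thm:KKLRU}.

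\textbf{The section.} Set $\mathfrak X=B_{F_0}$, which is dense by the Baire category theorem since $X$ is compact metrizable. For each $x\in\mathfrak X$ choose $H_x\in\cA_{\cG_x^x}$; no measurability is needed because Theorem \ref{thm:KKLRU} requires none. Each $H_x$ is amenable by Le Boudec--Matte Bon, so $\Lambda=\{H_x\colon x\in\mathfrak X\}$ is an amenable section. By Lemma \ref{lem:groupoid-urs-obstruction}, $H_x\cap F_0\neq\emptyset$ for every $x\in\mathfrak X$.

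\textbf{Translates.} We need the same property for translates $g\cdot H_x$ with $r(g)=y$. Here we use that the Furstenberg URS is invariant under isomorphisms, so $\varphi_*(\cA_H)=\cA_{H'}$. Conjugation by $g$ is an isomorphism $\cG_x^x\to\cG_y^y$, hence $gH_xg\inv\in\cA_{\cG_y^y}$. If moreover $y\in B_{F_0}$, Lemma \ref{lem:groupoid-urs-obstruction} gives $gH_xg\inv\cap F_0\neq\emptyset$. So every element of $\cG\cdot\Lambda$ lying over the comeager set $B_{F_0}$ meets $F_0$.

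\textbf{Passing to the closure (main obstacle).} We must control limits in $\overline{\cG\cdot\Lambda}$, and translates over points outside $B_{F_0}$ need not meet $F_0$. The plan is to show $\cG\cdot\Lambda\subseteq T_{U}$ up to a harmless set, where $U\supseteq F_0$ is a precompact open neighbourhood with $\overline U\cap X=\emptyset$; then every limit meets $\overline U$ and is not contained in $X$. To handle translates landing over a non-generic $y$, I would first shrink the section. The set of $x\in\mathfrak X$ whose whole orbit $r(\cG_x)$ lies in $B_{F_0}$ is still comeager: $X\setminus B_{F_0}$ is meager, and its saturation is a countable union of images under the partial homeomorphisms $\theta_W$ for $W$ in a countable basis of bisections, hence meager. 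Take $\mathfrak X$ to be this invariant comeager set intersected with $B_{F_0}$. Then every element of $\cG\cdot\Lambda$ lies over $B_{F_0}$ and meets $F_0$.

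\textbf{Conclusion.} Fell-closedness of $T_{\overline U}^c=M_{\overline U}$'s complement, i.e.\ the closedness of $\{H\colon H\cap \overline U\neq\emptyset\}$ for compact $\overline U$ (its complement $M_{\overline U}$ is open), gives $H\cap\overline U\neq\emptyset$ for all $H\in\overline{\cG\cdot\Lambda}$. Since $\overline U\cap X=\emptyset$, no such $H$ is contained in $X$, so \eqref{eq:KKLRU} holds at every point, and in particular $\Lambda$ is confined. This contradicts Theorem \ref{thm:KKLRU}.
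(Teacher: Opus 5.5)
Your proof is correct and follows the paper's argument. You pass to $F_0$ with $B_{F_0}$ comeager, discard the meager saturation of $X\setminus B_{F_0}$, combine conjugation-invariance of the Furstenberg URS with Lemma \ref{lem:groupoid-urs-obstruction}, and conclude via closedness of $\{L\colon L\cap F_0\neq\emptyset\}$ and Theorem \ref{thm:KKLRU}. The only cosmetic differences are that the paper builds the section over a single dense orbit from conjugates of one $A\in\cA_{\cG_x^x}$ rather than choosing independently over the whole invariant comeager set, and that it works with the compact set $F_0$ directly, so your auxiliary neighbourhood $U$ is unnecessary.
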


    \begin{proof}
        Suppose by contradiction that there is a compact $F$ such that $B_{F}$ is not meager. Then by Lemma \ref{lem:comeager}, there is a compact $F_0\subseteq \cG\setminus X$ such that $B_{F_0}$ is comeager in $X$. 
        
        Choose a countable cover $(V_n)_{n\geq1}$ of $\cG$ by open bisections, and put $M=X\setminus B_{F_0}$. Then
        \[
        \cG\cdot M=r(s^{-1}(M))
        =\bigcup_{n\geq1}\theta_{V_n}(M\cap s(V_n))
        \]
        is meager. Therefore
        \[
        Y=X\setminus(\cG\cdot M)
        \]
        is comeager, invariant, and contained in $B_{F_0}$.
        
   Fix $x\in Y$ and choose $A\in\cA_{\cG_x^x}$. Let $O=\cG\cdot x$. This set is dense in $X$ by minimality and satisfies $O\subseteq Y\subseteq B_{F_0}$. For every $y\in O$, choose an arrow $\gamma_y\colon x\to y$ and put
    \[
    A_y=\gamma_yA\gamma_y^{-1}\leq\cG_y^y.
    \]
    
    The family
    \[
    \Lambda=\{A_y\colon y\in O\}
    \]
    is an amenable section, since $A$ and all its translates are amenable.
    
    Let $L$ be an element of $\cG\cdot\Lambda$. Then there is an element $g\in \cG$ with $s(g)=x$ and $r(g)=z\in O$ such that $L=gAg\inv\leq \cG_z^z$. Then $L\in \cA_{\cG_z^z}$, and since $z\in B_{F_0}$, $L\cap F_0\neq\emptyset$.
 
    The set
    \[
    \{L\in\Sub(\cG):L\cap F_0\neq\varnothing\}
    \]
    is closed, because its complement is a miss-compact open set. Hence every element of $\overline{\cG\cdot\Lambda}$ meets $F_0$. Since $F_0\cap X=\varnothing$, no such subgroup is contained in $X$. Thus $\Lambda$ is a confined amenable section, contradicting Theorem~\ref{thm:KKLRU}.
    \end{proof}

    \begin{thm}\label{thm:main}
       Let $\cG$ be a second-countable locally compact Hausdorff étale minimal groupoid with compact unit space $X$. If $\cs_r(\cG)$ is simple, then the set of unit points with $\cs$-simple isotropy is comeager. 
    \end{thm}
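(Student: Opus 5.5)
The plan is to combine Kennedy's criterion (Theorem~\ref{thm:kennedy}) with Proposition~\ref{prop:BFmeager}. Let $B=\{x\in X\colon \cG_x^x \text{ is not $\cs$-simple}\}$. We show that $B$ is contained in a countable union of sets $B_F$ with $F\subseteq \cG\setminus X$ compact. Each of these is meager by Proposition~\ref{prop:BFmeager}, so $B$ is meager and its complement is comeager.

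To obtain the countable family, fix a countable basis $(U_n)_{n\in\N}$ of $\cG$ consisting of precompact open bisections $U_n$ with $\overline{U_n}\subseteq\cG\setminus X$. Such a basis exists because $X$ is clopen in the Hausdorff étale groupoid $\cG$: it is open by étaleness and closed by Hausdorffness. Hence $\cG\setminus X$ is open and second-countable. For every finite set $S\subseteq\N$ put $F_S=\bigcup_{n\in S}\overline{U_n}$. This is a compact subset of $\cG\setminus X$, and there are only countably many such sets.

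Now let $x\in B$. By Theorem~\ref{thm:kennedy} there are an amenable $K\leq\cG_x^x$ and a finite $E\subseteq\cG_x^x\setminus\{x\}$ such that $hKh\inv\cap E\neq\emptyset$ for all $h\in\cG_x^x$. Each $e\in E$ is a nonunit arrow, so it lies in some $U_{n_e}$. Setting $S=\{n_e\colon e\in E\}$ gives $E\subseteq F_S$, and therefore $hKh\inv\cap F_S\neq\emptyset$ for every $h$. Thus $x\in B_{F_S}$, which shows $B\subseteq\bigcup_S B_{F_S}$.

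A countable union of meager sets is meager, so $B$ is meager. The genuine difficulty, namely the analyticity of $B_F$ and the confined-section argument, has already been handled in Theorem~\ref{thm:BFCFanalytic} and Proposition~\ref{prop:BFmeager}. The only point requiring care here is making sure the chosen compact sets avoid $X$, and this is exactly where the Hausdorff assumption, through the clopenness of $X$, is used.
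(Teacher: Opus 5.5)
Your proposal is correct and takes essentially the same approach as the paper. Kennedy's criterion places each point with non-$\cs$-simple isotropy in some $B_F$, a countable family of compact subsets of $\cG\setminus X$ absorbs every finite set of nonunit arrows, and Proposition~\ref{prop:BFmeager} finishes. The paper uses an increasing compact exhaustion $(F_n)$ of $\cG\setminus X$ instead of your finite unions $F_S$ of closures of basic bisections, which is an inessential difference; note only that your $U_n$ should form a basis of $\cG\setminus X$ rather than of $\cG$, as you clearly intend.
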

    
    \begin{proof}
         The subspace $\cG\setminus X$ is locally compact Hausdorff and second-countable. Therefore it admits a countable increasing exhaustion by compact sets $F_n\subseteq \cG\setminus X$. 
         
         If $x\in B$, then by Kennedy's criterion \ref{thm:kennedy} there is a finite subset $F\subseteq \cG_x^x\setminus \{x\}$ and an amenable subgroup $K\leq \cG_x^x$ such that $hKh\inv\cap F\neq\emptyset$ for all $h\in \cG_x^x$. This shows that $x\in B_F$. Since $(F_n)_{n\in \N}$ is an exhaustion of $\cG\setminus X$, there is an index $n$ such that $F\subseteq F_n$, and therefore $B_F\subseteq B_{F_n}$. This proves $B\subseteq \bigcup_{n\in \N} B_{F_n}$. 
         
         By Proposition \ref{prop:BFmeager}, every set $B_{F_n}$ is meager, hence $B$ is meager as well.
    \end{proof}

   We are now ready to state and prove the main theorem.

   \begin{thm}\label{thm:mainproved}
       Let $\cG$ be a second-countable locally compact Hausdorff minimal étale groupoid with compact unit space. The following are equivalent:
       \begin{enumerate}
           \item the reduced groupoid $\cs$-algebra $\cs_r(\mathcal{G})$ is simple;
           \item there is a comeager set of unit points with $\cs$-simple isotropy;
           \item there is a comeager set of unit points with $\cs_e(\cG_x^x)$ simple;
           \item there is a point $x\in X$ with $\cs_e(\cG_x^x)$ simple;
           \item there is a point $x\in X$ such that  $\cG_x^x$ is $\cs$-simple and $\cs_e(\cG_x^x)=\cs_r(\cG_x^x)$.
       \end{enumerate}
   \end{thm}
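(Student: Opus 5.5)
We will establish the cycle of implications (1)$\Rightarrow$(2)$\Rightarrow$(3)$\Rightarrow$(4)$\Rightarrow$(5)$\Rightarrow$(1). Almost all of the work has already been done.

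\textbf{(1)$\Rightarrow$(2).} This is exactly Theorem \ref{thm:main}.

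\textbf{(2)$\Rightarrow$(3).} By the theorem of Christensen and Neshveyev recalled in Section \ref{sec:prelim}, the set $\{x\colon \cs_e(\cG_x^x)=\cs_r(\cG_x^x)\}$ is comeager. The intersection of two comeager sets is comeager, so the set of $x$ with $\cG_x^x$ $\cs$-simple and $q_x$ an isomorphism is comeager. For every such $x$, the algebra $\cs_e(\cG_x^x)\cong\cs_r(\cG_x^x)$ is simple.

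\textbf{(3)$\Rightarrow$(4).} A comeager subset of the compact metrizable (hence Baire) space $X$ is nonempty, so some point $x$ has $\cs_e(\cG_x^x)$ simple.

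\textbf{(4)$\Rightarrow$(5).} This step needs a small argument. Let $x$ be a point with $\cs_e(\cG_x^x)$ simple. The quotient map $q_x\colon \cs_e(\cG_x^x)\to\cs_r(\cG_x^x)$ is surjective and nonzero, because $\cs_r(\cG_x^x)\neq0$. Its kernel is therefore a proper closed ideal of a simple $\cs$-algebra, so it is zero. Hence $q_x$ is injective, that is, $\cs_e(\cG_x^x)=\cs_r(\cG_x^x)$. It follows that $\cs_r(\cG_x^x)$ is simple, i.e. $\cG_x^x$ is $\cs$-simple.

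\textbf{(5)$\Rightarrow$(1).} This is precisely the criterion of Christensen and Neshveyev \cite[Corollary 4.4]{ChristensenNeshveyev2}, recalled in Section \ref{sec:prelim}.

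The only genuinely hard step is (1)$\Rightarrow$(2), which has already been proved. The remaining implications are bookkeeping. The one point requiring care is in (4)$\Rightarrow$(5): we must note that $q_x$ is nonzero, so that simplicity of $\cs_e(\cG_x^x)$ forces its kernel to vanish.
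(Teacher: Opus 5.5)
Your proposal is correct and follows the paper's proof essentially verbatim: the same cycle of implications, with (1)$\Rightarrow$(2) from Theorem \ref{thm:main}, (2)$\Rightarrow$(3) from the Christensen--Neshveyev comeagerness theorem, (4)$\Rightarrow$(5) from injectivity of the quotient $q_x$ on a simple algebra, and (5)$\Rightarrow$(1) from \cite[Corollary 4.4]{ChristensenNeshveyev2}. You also spell out the small details the paper leaves implicit, namely that comeager sets intersect in a comeager set, that a comeager set in the Baire space $X$ is nonempty, and that $q_x$ is nonzero.
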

   \begin{proof}
       The implication (1)$\Rightarrow$(2) is Theorem \ref{thm:main}. 
       
       By \cite[Theorem 5.1]{ChristensenNeshveyev2}, the set of points with $\cs_e(\cG_x^x)=\cs_r(\cG_x^x)$ is comeager, which implies (2)$\Rightarrow$(3). 
       
       The implication (3)$\Rightarrow$(4) is obvious, and (4)$\Rightarrow$(5) follows from the fact that the canonical quotient map $q_x\colon \cs_e(\cG_x^x)\to\cs_r(\cG_x^x)$ must be an isomorphism when $\cs_e(\cG_x^x)$ is simple. 
       
       Finally, the implication (5)$\Rightarrow$(1) is \cite[Corollary 4.4]{ChristensenNeshveyev2} applied to Hausdorff groupoids.
   \end{proof}

   In analogy with the crossed product case, it is natural to ask:

\begin{qst}\label{qst:onepoint}
    Let $\cG$ be a second-countable locally compact Hausdorff étale minimal groupoid with compact unit space, and suppose there is a unit point $x$ such that the isotropy group $\cG_x^x$ is $\cs$-simple. Does it follow that the reduced groupoid $\cs$-algebra is simple?
\end{qst}

An affirmative answer to Question \ref{qst:onepoint} would follow from a negative answer to \cite[Question 4.5]{ChristensenNeshveyev2}. In the next section, we present a groupoid that answers \cite[Question 4.5]{ChristensenNeshveyev2} positively. Nevertheless, we show that the associated reduced $\cs$-algebra is simple, leaving Question \ref{qst:onepoint} open.

Christensen and Neshveyev proved several criteria for the absence of exotic completions in graded groupoids, see \cite[Section 4]{ChristensenNeshveyev1}. In particular, they showed that for every unit point $x$ of an étale groupoid associated with a partial action of a discrete group on a locally compact Hausdorff space, one has $\|\cdot\|_{e,x}=\|\cdot\|_{r,x}$. Combined with Theorem \ref{thm:mainproved}, we obtain the following generalization of the results of Ozawa and Bray--Kennedy to partial actions on compact metrizable spaces.

\begin{cor}
    Let $G$ be a countable discrete group and $\alpha$ be a minimal partial action on a compact metrizable space $X$. The following are equivalent:
       \begin{enumerate}
           \item\label{cond:simplicity} the reduced crossed product $C(X)\rtimes_{\alpha,r} G$ is simple;
           \item\label{cond:comeager} there is a comeager set of points in $X$ with $\cs$-simple stabilizer;
           \item\label{cond:onepoint} there is a point $x\in X$ with $\cs$-simple stabilizer.
       \end{enumerate}
\end{cor}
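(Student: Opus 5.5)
The plan is to reduce the corollary to Theorem \ref{thm:mainproved}, applied to the transformation groupoid $\cG=G\ltimes_\alpha X$ of the partial action. I will first check that this groupoid satisfies the standing hypotheses. Its arrows are the pairs $(g,x)$ with $x\in D_{g\inv}$, the domain of $\alpha_g$, topologized as an open subset of $G\times X$. Since $G$ is countable and $X$ is compact metrizable, $\cG$ is second-countable, locally compact and Hausdorff, its unit space $X$ is compact, and it is étale because each slice $\{g\}\times D_{g\inv}$ is an open bisection. Minimality of $\alpha$ is exactly minimality of $\cG$, since orbits coincide. The isotropy group at $x$ is $\{(g,x)\colon x\in D_{g\inv},\ \alpha_g(x)=x\}$, which is isomorphic to the stabilizer $G_x$. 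Finally, $\cs_r(\cG)\cong C(X)\rtimes_{\alpha,r}G$; this is the standard identification of reduced partial crossed products with reduced groupoid $\cs$-algebras, and I will quote it rather than reprove it.

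For \eqref{cond:simplicity}$\Rightarrow$\eqref{cond:comeager}, I apply Theorem \ref{thm:main} (the implication (1)$\Rightarrow$(2) of Theorem \ref{thm:mainproved}) to $\cG$. Since $\cG_x^x\cong G_x$ and $\cs$-simplicity is an isomorphism invariant, this gives a comeager set of points with $\cs$-simple stabilizer.

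For \eqref{cond:comeager}$\Rightarrow$\eqref{cond:onepoint}, note that $X$ is compact metrizable and hence a Baire space. It is also nonempty, so a comeager set in $X$ is nonempty.

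For \eqref{cond:onepoint}$\Rightarrow$\eqref{cond:simplicity}, take $x$ with $G_x$ $\cs$-simple. By the result of Christensen and Neshveyev \cite[Corollary 4.15]{ChristensenNeshveyev1} for groupoids of partial actions, $\|\cdot\|_{e,x}=\|\cdot\|_{r}$ on $\C[\cG_x^x]$. Hence $\cs_e(\cG_x^x)=\cs_r(\cG_x^x)$, and condition (5) of Theorem \ref{thm:mainproved} holds. That theorem, or directly \cite[Corollary 4.4]{ChristensenNeshveyev2}, then gives simplicity of $\cs_r(\cG)\cong C(X)\rtimes_{\alpha,r}G$.

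The only step requiring care is the translation between the partial action and the groupoid. In particular, I need that the Christensen–Neshveyev statement applies to this Hausdorff transformation groupoid, so that no extra hypothesis such as topological freeness is being smuggled in. The rest is formal.
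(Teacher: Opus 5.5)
Your proposal is correct and follows essentially the same route as the paper: you pass to the transformation groupoid of the partial action, identify isotropy with stabilizers and $\cs_r(\cG)$ with the reduced partial crossed product, obtain (1)$\Rightarrow$(2) from Theorem \ref{thm:main}, and obtain (3)$\Rightarrow$(1) from Christensen--Neshveyev's equality of the exotic and reduced isotropy norms for partial-action groupoids combined with Theorem \ref{thm:mainproved}. The only differences are cosmetic: you invoke condition (5) of that theorem where the paper uses (4), and you spell out the Baire category argument for why a comeager subset of the nonempty compact metrizable space $X$ is nonempty, a step the paper calls immediate.
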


\begin{proof}
    Let $\cG_\alpha$ be the transformation-type groupoid associated to $\alpha$, as in \cite[Example 4.2]{ChristensenNeshveyev1}.

    The stabilizer of a point $x$ under the partial action $\alpha$ is canonically identified with $(\cG_\alpha)_x^x$ and by \cite[Proposition 2.2]{Li}, the crossed product $C(X)\rtimes_{\alpha,r} G$ is canonically isomorphic to the reduced $\cs$-algebra of $\cG_\alpha$. This groupoid satisfies the hypotheses of Theorem \ref{thm:main}, hence the implication \eqref{cond:simplicity}$\Rightarrow$\eqref{cond:comeager} follows. The implication \eqref{cond:comeager}$\Rightarrow$\eqref{cond:onepoint} is immediate. 
    
    Suppose that $(\cG_\alpha)_x^x$ is $\cs$-simple for some $x\in X$. By \cite[Corollary 4.15]{ChristensenNeshveyev1}, the exotic completion $\cs_e((\cG_\alpha)_x^x)$ coincides with $\cs_r((\cG_\alpha)_x^x)$ and is therefore simple. Theorem \ref{thm:mainproved} and the canonical isomorphism above imply that $C(X)\rtimes_{\alpha,r} G$ is simple.
\end{proof}

\section{Free products of groupoids and a minimal groupoid with exotic isotropy}\label{sec:counterexample}

In this section, we recall the definition of free products of groupoids, and we equip it with a topology. We use this tool to construct a second-countable locally compact Hausdorff étale minimal groupoid $\cG$ with compact unit space such that
\begin{enumerate}
    \item the set $\Am(\cG)$ is not closed in $\Sub(\cG)$;
    \item\label{property:exoticity} there exists $x\in\cG^{(0)}$ such that
    \[
        \cs_e(\cG_x^x)\neq \cs_r(\cG_x^x).
    \]
\end{enumerate}
In particular, property~\ref{property:exoticity} gives a positive answer to \cite[Question 4.5]{ChristensenNeshveyev2}. Nevertheless, the reduced groupoid $\cs$-algebra of $\cG$ will be simple, hence Question \ref{qst:onepoint} remains open.

We define the free product of groupoids with common unit space. For a more general notion see \cite[Chapter 11]{Higgins}.

\begin{defn}
    Let $\cG$ and $\cH$ be groupoids with common unit space $X$.  Let $\cG^\times=\cG\setminus X$ and $\cH^\times=\cH\setminus X$. For an alternating tuple $\epsilon=(\epsilon_1,\ldots,\epsilon_l)\in\{ \cG^\times,\cH^\times\}^l$, let
\[
    W_\epsilon=\left\{(g_1,\ldots,g_l)\in\epsilon_1\times\cdots\times\epsilon_l:s(g_i)=r(g_{i+1})\text{ for }1\leq i<l\right\}.
\]
The \emph{free product} of $\cG$ and $\cH$ \emph{amalgamated over} $X$ is the groupoid
\[
\cG*_X\cH\defeq X\sqcup\bigsqcup_{\epsilon}W_\epsilon, 
\]
where $\epsilon$ ranges over the alternating tuples in $\{\cG^\times,\cH^\times\}^l$ and $l\in \{1,2,\dots\}$.

Two words $a_1\cdots a_n$ and $b_1\cdots b_m$ are composable if $r(b_1)=s(a_n)$ and in this case the composition is $a_1\cdots a_nb_1\cdots b_m$, followed by potential reductions. The inverse of $a_1\cdots a_n$ is $a_n\inv\cdots a_1\inv$. Finally elements of $X$ act as identities. With this structure $\cG*_X\cH$ is a groupoid with unit space $X$.
\end{defn}

We point out that the original definition of free products is given via a universal property. This more concrete version is essentially \cite[Chapter 11, Theorem 5]{Higgins}.

When $\cG$ and $\cH$ are Hausdorff étale groupoids, the free product $\cG*_X\cH$ carries a natural topology that turns it into a Hausdorff étale groupoid. Indeed, each $W_\epsilon$ is a (closed) subset of a product space and therefore carries a subspace topology; moreover, the decomposition $\cG*_X\cH\defeq X\sqcup\bigsqcup_{\epsilon}W_\epsilon$ induces the direct sum topology on $\cG*_X\cH$. More concretely, a subset $U\subseteq \cG*_X\cH$ is open if and only if $U\cap X$ is open in $X$ and for every alternating tuple $\epsilon$ the intersection $U\cap W_\epsilon$ is open in $W_\epsilon$. In particular, $X$ and all the $W_\epsilon$ are open in $\cG*_X\cH$.

\begin{lem}\label{lem:fpgroupoids}
    Let $\cG$ and $\cH$ be Hausdorff étale groupoids. Then the free product $\cG*_X\cH$ is a Hausdorff étale groupoid. If $\cG$ and $\cH$ are both locally compact and second countable, then so is $\cG*_X\cH$.
\end{lem}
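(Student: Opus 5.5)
The plan is to verify the groupoid axioms topologically piece by piece, using that the topology is the disjoint-union topology on $X\sqcup\bigsqcup_\epsilon W_\epsilon$, so every continuity question reduces to a question on each open piece $W_\epsilon$.

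\emph{Hausdorffness, local compactness, second countability.} Each $W_\epsilon$ is a subspace of the product $\epsilon_1\times\cdots\times\epsilon_l$, where $\cG^\times=\cG\setminus X$ and $\cH^\times=\cH\setminus X$ are open subsets of $\cG$ and $\cH$; for étale Hausdorff groupoids $X$ is clopen, so $\cG^\times$ and $\cH^\times$ are open and closed. The conditions $s(g_i)=r(g_{i+1})$ are closed because $X$ is Hausdorff. Hence $W_\epsilon$ is closed in a Hausdorff product, and if $\cG,\cH$ are locally compact second countable then so is each $W_\epsilon$. A countable disjoint union of such spaces (there are countably many alternating tuples: two for each length $l$) is again Hausdorff, locally compact and second countable. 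The unit space $X$ is open.

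\emph{Range and source are local homeomorphisms.} On $W_\epsilon$ we have $r(g_1,\dots,g_l)=r(g_1)$ and $s(g_1,\dots,g_l)=s(g_l)$, which are continuous. For étaleness, take $(g_1,\dots,g_l)\in W_\epsilon$ and choose open bisections $U_i\ni g_i$ in $\epsilon_i$. The set $U=(U_1\times\cdots\times U_l)\cap W_\epsilon$ is open. I claim $r|_U$ is a homeomorphism onto an open set. Given $y$ near $r(g_1)$, the composability constraints force $g_1'=(r|_{U_1})^{-1}(y)$, then $g_2'=(r|_{U_2})^{-1}(s(g_1'))$, and so on. Thus $r(U)=\theta_{U_1}\circ\cdots$-type domain, namely an open set defined by the iterated partial homeomorphisms, and the inverse is a composition of continuous maps. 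Source is handled symmetrically, or via inversion.

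\emph{Continuity of multiplication and inversion.} Inversion maps $W_{(\epsilon_1,\dots,\epsilon_l)}$ to $W_{(\epsilon_l,\dots,\epsilon_1)}$ by reversing and inverting coordinates, which is continuous. Multiplication is the main obstacle, because reductions can change the word length. I would fix a composable pair $(a,b)\in W_\epsilon\times W_\delta$ and describe its product as determined by the reduction pattern: letters cancel pairwise ($a_{n-k+1}b_k=r(b_k)$ for $k\le j$), and then either the next pair merges into a nontrivial letter $a_{n-j}b_{j+1}\in\epsilon^\times$ or the two letters lie in different factors and are concatenated. The difficulty is that the pattern at a nearby pair might differ; for instance, a product landing in $X$ is an open condition, but the condition that it not land in $X$ is also open since $X$ is clopen in $\cG$ and $\cH$.

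So the pattern is locally constant. The cancellation conditions are of the form ``product lies in $X$'', which is clopen, and whether the two letters belong to the same factor is fixed by $\epsilon,\delta$. Hence $(W_\epsilon\times_X W_\delta)$ splits into finitely many clopen pieces, one for each cancellation depth $j$ together with a merge/no-merge outcome. On each piece the product lands in a single $W_\gamma$, and its coordinates are continuous functions (copies of coordinates or products $a_{n-j}b_{j+1}$) by continuity of multiplication in $\cG$ and $\cH$. Products involving units are trivial. This gives continuity of multiplication and completes the proof.
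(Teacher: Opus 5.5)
Your proposal is correct and follows essentially the same route as the paper. Both arguments rest on $X$ being clopen in each factor, so the cancellation pattern of a product is locally constant: you encode this as a finite clopen partition of the composable pairs, while the paper uses nets that eventually perform the same reductions. Both also prove étaleness by recovering the letters of a word in $U_1\cdots U_n$ recursively from one endpoint, you via $r$ and the paper via $s$. One harmless slip: a cancelling pair satisfies $a_{n-k+1}b_k=r(a_{n-k+1})=s(b_k)$, not $r(b_k)$.
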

\begin{proof}
    The unit space of a Hausdorff groupoid is closed, and that of an étale groupoid is open. Thus $X$ is clopen in both $\cG$ and $\cH$.

    By definition of the direct sum topology, if a net of reduced words converges to a word in $W_\epsilon$, then it belongs eventually to $W_\epsilon$ and converges coordinatewise there. Consequently, the source and range maps are continuous, since they are given on reduced words by
    \[
        s(a_1\cdots a_n)=s(a_n),
        \qquad
        r(a_1\cdots a_n)=r(a_1),
    \]
    and inversion is continuous because it reverses the order of the letters and applies inversion coordinatewise.

    To prove continuity of multiplication, let $(u_\lambda,v_\lambda)\to(u,v)$ be a net of composable pairs. After restricting to a tail, the lengths and factor patterns of $u_\lambda$ and $v_\lambda$ agree with those of $u$ and $v$, and all their letters converge coordinatewise. Consider the finite reduction procedure used to compute $uv$. At each step, either the two letters at the junction belong to different factors, in which case reduction stops, or they belong to the same factor and are multiplied there. The corresponding products for $u_\lambda v_\lambda$ converge to the product appearing in the reduction of $uv$. Since $X$ is clopen, such a product belongs to $X$ if and only if the corresponding products for the net eventually belong to $X$. Thus, by induction over the finitely many reduction steps, the reduction of $u_\lambda v_\lambda$ eventually performs exactly the same multiplications and cancellations as the reduction of $uv$. The letters of the resulting reduced words therefore converge coordinatewise to those of $uv$. If both words cancel completely, then $u_\lambda v_\lambda$ is eventually a unit and converges to $uv$ by continuity of the range map. Hence $u_\lambda v_\lambda\to uv$, proving continuity of multiplication.

    Each $W_\epsilon$ is Hausdorff, and therefore their topological disjoint union with $X$ is Hausdorff. It remains to verify étaleness. Let $a_1\cdots a_n\in W_\epsilon$, and choose open bisections $U_i$ in the appropriate factors such that $a_i\in U_i\subseteq\epsilon_i$. Then
    \[
        U_1\cdots U_n
        =
        \{u_1\cdots u_n\in W_\epsilon:u_i\in U_i\}
    \]
    is an open neighborhood of $a_1\cdots a_n$. A word in this set is uniquely determined by its source: starting from $x=s(u_n)$, its letters are recovered recursively by
    \[
        u_n=(s|_{U_n})^{-1}(x),
        \qquad
        u_i=(s|_{U_i})^{-1}(r(u_{i+1}))
        \quad(i=n-1,\ldots,1).
    \]
    Since the $U_i$ are open bisections, the set of points for which this recursion is defined is open, and the recursion depends continuously on $x$. Thus the source map restricts to a homeomorphism from $U_1\cdots U_n$ onto an open subset of $X$. The same holds at units because $X$ is open and $s|_X=\id_X$. Hence $\cG*_X\cH$ is étale.

    Finally, suppose that $\cG$ and $\cH$ are locally compact and second-countable. Each $W_\epsilon$ is closed in the corresponding finite product. It is therefore locally compact and second-countable. Since there are only countably many finite alternating tuples, their topological disjoint union with $X$ is locally compact and second-countable.
\end{proof}

We now turn to the answer to \cite[Question 4.5]{ChristensenNeshveyev2}. The construction is based on the groupoids introduced by Higson, Lafforgue and Skandalis in their counterexamples to the Baum--Connes conjecture, see \cite[Section 2]{HLS}. Christensen and Neshveyev used a related construction in \cite[Example 2.11]{ChristensenNeshveyev1} to exhibit exotic isotropy completions. We use a free product construction to make the resulting groupoid minimal. 

Let $\N_\infty=\N\cup\{\infty\}$ be the one-point compactification of $\N$, and let $C$ be the Cantor space. Choose a decreasing sequence $(\Gamma_n)_{n\in\N}$ of finite-index normal subgroups of $\Gamma=\mathbb F_2$ such that
\[
    \bigcap_{n\in\N}\Gamma_n=\{e\}.
\]
Set $K_n=\Gamma/\Gamma_n$, and denote the quotient map by $\pi_n\colon\Gamma\to K_n$. For notational simplicity, let $K_\infty=\Gamma$ and $\pi_\infty=\id_\Gamma$. 

Consider the group bundle
\[
    \mathcal B=\bigsqcup_{n\in\N_\infty}\bigl(\{n\}\times K_n\bigr)
\]
over $\N_\infty$. Every point $(n,k)$ with $n\in\N$ is isolated, while a neighbourhood basis of $(\infty,g)$ is given by
\[
    V_{g,N}=\{(\infty,g)\}\cup\{(n,\pi_n(g)):n\geq N\},\qquad N\in\N.
\]
The groupoid operations are defined fibrewise. It is straightforward to verify that $\mathcal B$ is second-countable, locally compact and étale. Moreover, the assumption $\bigcap_n\Gamma_n=\{e\}$ implies that $\mathcal B$ is Hausdorff.

Let $\mathcal C=\mathcal B\times C$. This is a group bundle over $X=\N_\infty\times C$ where the source and range of $(n,k,c)$ are both equal to $(n,c)$. The groupoid $\mathcal C$ is also second-countable, locally compact, Hausdorff and étale, and its unit space $X$ is homeomorphic to the Cantor space.

 Since $X$ is again a Cantor space, we can fix a minimal homeomorphism $T\colon X\to X$. Let $\mathcal H=X\rtimes_T\Z$ be the associated transformation groupoid. Let $\cG=\mathcal C*_X\mathcal H$ be the free product of $\mathcal C$ and $\mathcal H$ amalgamated over their common unit space. By Lemma \ref{lem:fpgroupoids}, $\cG$ is a second-countable locally compact Hausdorff étale groupoid.

 Since every arrow of $\mathcal C$ is isotropy, it follows that
\[
    \cG\cdot x=\{T^n(x):n\in\Z\}
\]
for every $x\in X$. Minimality of $T$ therefore implies that $\cG$ is minimal.

We first show that amenability is not closed in $\Sub(\cG)$. Fix $c\in C$ and for $n\in \N_\infty$ write $x_n=(n,c)$. We regard $K_n$ as the subgroup
\[
    \{(n,k,c):k\in K_n\}\leq\cG_{x_n}^{x_n}.
\]

\begin{prop}\label{prop:amenability-not-closed}
The sequence $(K_n)_{n\in \N}$ converges to $K_\infty$ in $\Sub(\cG)$. Consequently, $\Am(\cG)$ is not closed.
\end{prop}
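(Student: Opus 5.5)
We verify convergence in the groupoid-Chabauty topology by checking the subbasic touch-open and miss-compact sets of \eqref{eq:touch-open-miss-compact}. The subgroups $K_n$ and $K_\infty$ all live inside the group bundle $\mathcal C$, which is an open subset of $\cG$: it is the union of $X$ and $W_{(\mathcal C^\times)}$. This lets us work in the topology of $\mathcal B\times C$.

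\textbf{Touch-open sets.} Suppose $U$ is an open set with $U\cap K_\infty\neq\emptyset$, and pick $(\infty,g,c)\in U$. Since $\mathcal C$ is open in $\cG$, $U\cap\mathcal C$ is a neighbourhood of $(\infty,g,c)$ in $\mathcal B\times C$. It therefore contains a set of the form $V_{g,N}\times\{c\}$ (intersected with a neighbourhood of $c$). In particular $(n,\pi_n(g),c)\in U$ for all $n\geq N$, so $K_n\in T_U$ eventually.

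\textbf{Miss-compact sets.} Suppose $L$ is compact with $L\cap K_\infty=\emptyset$. We must show $L\cap K_n=\emptyset$ for large $n$. Suppose not. Then, after passing to a subsequence, there are $n_j\to\infty$ and $k_j\in K_{n_j}$ with $(n_j,k_j,c)\in L$. By compactness, and since $\cG$ is second-countable hence metrizable, a subsequence converges to some $\ell\in L$. The points $(n_j,k_j,c)$ lie in the open set $\mathcal C$, and $\mathcal C$ is closed in $\cG$, being the complement of the open set $\bigsqcup_{\epsilon\neq(\mathcal C^\times)}W_\epsilon$. Hence $\ell\in\mathcal C$. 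Its unit coordinate is $\lim (n_j,c)=(\infty,c)$, so $\ell=(\infty,g,c)$ for some $g\in\Gamma$, that is, $\ell\in K_\infty\cap L$. This contradicts $L\cap K_\infty=\emptyset$.

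This is the step needing the most care. It uses that $\mathcal B$ is Hausdorff and that limits of points $(n_j,k_j)$ in $\mathcal B$ must sit over $\infty$, which is guaranteed by the explicit neighbourhood basis. Equivalently, one can note that $K_n\cup K_\infty$ is closed and argue directly with the basis $V_{g,N}$. Together with the previous paragraph, this gives $K_n\to K_\infty$ in $\Sub(\cG)$.

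\textbf{Consequence.} Each $K_n$ is finite, since $\Gamma_n$ has finite index, and so $K_n$ is amenable. The limit $K_\infty=\mathbb F_2$ is not amenable. Hence $\Am(\cG)$ is not closed.
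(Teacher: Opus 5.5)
Your proposal is correct and follows essentially the same route as the paper. Both arguments handle touch-open sets through the convergence $(n,\pi_n(g),c)\to(\infty,g,c)$, and handle miss-compact sets by a subsequence/compactness argument that uses closedness of $\mathcal C$ in $\cG$ to place the limit point in the isotropy over $x_\infty$. Your extra remark that $\mathcal C=X\sqcup W_{(\mathcal C^\times)}$ is clopen, so its subspace topology is that of $\mathcal B\times C$, only makes explicit what the paper uses implicitly.
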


\begin{proof}
We verify the touch-open and miss-compact conditions. Let $U\subseteq\cG$ be open and suppose that $U\cap K_\infty\neq\emptyset$. Choose $(\infty,g,c)\in U\cap K_\infty$. By the definition of the topology of $\cG$,
\[
    (n,\pi_n(g),c)\to(\infty,g,c).
\]
Since $(n,\pi_n(g),c)\in K_n$, it follows that $K_n\cap U\neq\emptyset$ for all sufficiently large $n$.

Now let $L\subseteq\cG$ be compact and suppose that $L\cap K_\infty=\emptyset$. Assume for a contradiction that $K_n\cap L\neq\emptyset$ for infinitely many $n$. After passing to a subsequence, choose strictly increasing $n_j$ and $k_j\in K_{n_j}\cap L$. Compactness of $L$ therefore allows us to pass to a further subsequence such that
\[
    k_j\to k\in L.
\]
The subgroupoid $\mathcal C$ is closed in $\cG$, so $k\in\mathcal C$. Furthermore,
\[
    s(k_j)=r(k_j)=x_{n_j}\to x_\infty.
\]
By continuity of source and range, $s(k)=r(k)=x_\infty$. The isotropy group of $\mathcal C$ at $x_\infty$ is precisely the embedded copy of $\Gamma$, so $k\in K_\infty$. This contradicts $L\cap K_\infty=\emptyset$.

Thus $K_n\to K_\infty$. Every $K_n$ is finite and therefore amenable, whereas $K_\infty =\mathbb F_2$ is nonamenable. Hence $\Am(\cG)$ is not closed.
\end{proof}

Next, we show that the isotropy norm at $x_\infty$ is exotic. For $n\in \N_\infty$, let $H_n=\cG_{x_n}^{x_n}$.

\begin{prop}\label{prop:minimal-exotic-fibre}
One has
\[
    \cs_e(H_\infty)\neq \cs_r(H_\infty).
\]
\end{prop}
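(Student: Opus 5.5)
We will show that the exotic norm at $x_\infty$ dominates a representation of $\C[\Gamma]$ that does not factor through $\cs_r(\Gamma)$, namely the direct sum of the quasi-regular representations on the finite quotients, in the spirit of \cite[Example 2.11]{ChristensenNeshveyev1} and \cite{HLS}.

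The first point is to identify $H_\infty$. By the description of the free product, $H_\infty=\cG_{x_\infty}^{x_\infty}$ consists of reduced words $a_1\cdots a_l$ with source and range $x_\infty$. Letters from $\mathcal H^\times$ move points along the $T$-orbit, and letters from $\mathcal C^\times$ are isotropy. So $H_\infty$ is the set of alternating words whose $\mathcal H$-letters have total displacement zero and whose $\mathcal C$-letters sit at the points of the orbit of $x_\infty$ visited along the way. In particular $H_\infty$ contains $K_\infty=\Gamma=\mathbb F_2$ as the subgroup of one-letter $\mathcal C$-words, and it is a free-product-like group generated by the conjugates of the isotropy groups $\mathcal C_{T^m x_\infty}^{T^m x_\infty}$ by arrows of $\mathcal H$. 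What we actually need is a conditional expectation from $H_\infty$ onto $\Gamma$. We will use the canonical map $E\colon\C[H_\infty]\to\C[\Gamma]$ that restricts a function to $\Gamma$; on reduced $\cs$-algebras this is a contractive conditional expectation. It then suffices to find an element $a\in\C[\Gamma]\subseteq \C[H_\infty]$ with $\|a\|_{e,x_\infty}>\|a\|_{r}$.

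The candidate is the Higson--Lafforgue--Skandalis element $a=\frac{1}{4}\sum_{s\in S}(s+s^{-1})$, where $S$ is a free generating set of $\mathbb F_2$. Its reduced norm is $\|a\|_{\cs_r(\Gamma)}=\frac{\sqrt3}{2}<1$ by Kesten. Now let $f\in C_c(\cG)$ be any function with $f|_{H_\infty}=a$.

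\begin{enumerate}
\item Since $\mathcal C$ is clopen in $\cG$ (it is the union of $X$ and $W_{(\mathcal C^\times)}$), we may multiply $f$ by the indicator of the compact open set $\mathcal C$ and localize near $x_\infty$. Both operations are compressions by projections or multiplications by elements of $C(X)$. We must check that they do not increase $\|\cdot\|_r$ and that they keep the restriction equal to $a$.
\item Continuity of $f$ on the sets $V_{g,N}\times(\text{nbhd of }c)$, for the finitely many $g\in S^{\pm1}$, forces $f(n,\pi_n(g),c)\to\frac14$. Also $f$ is small on the rest of $\{n\}\times K_n\times\{c\}$ for large $n$, by compactness of the support together with Hausdorffness of $\mathcal B$.
\item Hence $\|f\|_r\ge\|\lambda_{x_n}(f)\|$. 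This representation contains, via the isotropy $K_n$ acting on $\ell^2(\cG_{x_n})\supseteq\ell^2(K_n)$ (compress to $\ell^2(K_n)$), an operator close to $\lambda_{K_n}(\pi_n(a))$. The latter has norm $1$, witnessed by the constant vector, since $K_n$ is finite.
\end{enumerate}

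Taking $n\to\infty$ gives $\|f\|_r\ge1$, so $\|a\|_{e}\ge1>\frac{\sqrt3}{2}=\|a\|_{r}$. This shows that $q_{x_\infty}$ is not injective.

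The main obstacle is the compression in step 3. The space $\ell^2(\cG_{x_n})$ also contains words involving $\mathcal H$-letters, so we must make sure that $\lambda_{x_n}(f)$ restricted to $\ell^2(K_n)$ only sees the values of $f$ on $\mathcal C$ at the fibre $x_n$. We plan to handle this by testing against the unit vector $|K_n|^{-1/2}\mathbf 1_{K_n}$ and computing $\langle\lambda_{x_n}(f)\xi,\xi\rangle$ directly. Only arrows $gh^{-1}$ with $g,h\in K_n$ contribute, and these lie in $\{n\}\times K_n\times\{c\}$. The inner product therefore equals $\sum_{k\in K_n}f(n,k,c)$, which tends to $1$ by step 2. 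This also bypasses step 1 entirely.
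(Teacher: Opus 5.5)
Your argument works once two steps are repaired (below). It follows a more hands-on route than the paper.

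The paper builds states $\omega_n=\varphi_n\circ\theta_{x_n,r}$ on $\cs_r(\cG)$ from the quasi-regular representations on $\ell^2(H_n/K_n)$, using amenability of $K_n$, weak containment and the ucp restriction map. It then passes to a weak$^*$ limit $\omega$ and uses Christensen--Neshveyev's Proposition~1.11 to factor $\omega$ through $\cs_e(H_\infty)$. That way it only has to evaluate $\omega$ on the bisections $1_{V_g}$.

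Your vector states $f\mapsto\langle\lambda_{x_n}(f)\xi_n,\xi_n\rangle=\sum_{k\in K_n}f(n,k,c)$ are exactly the paper's $\omega_n$, since $\varphi_n(u_h)$ is $1$ for $h\in K_n$ and $0$ otherwise. You see their positivity directly, because $\mathbf 1_{K_n}\in\ell^2(\cG_{x_n})$ when $K_n$ is finite. You then replace the limit state and the abstract factorization by a direct lower bound $\|f\|_r\geq 1$ for every lift $f$ of $a$. This is more elementary and self-contained. The paper's version is more conceptual and uses only amenability of the $K_n$, not their finiteness.

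Two steps need to be stated more carefully.

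\textbf{The sum over $K_n$.} Pointwise smallness of $f$ on $\{n\}\times(K_n\setminus\pi_n(S^{\pm1}))\times\{c\}$ would not suffice, because $|K_n|\to\infty$. What compactness actually gives is uniform support control. Cover the projection to $\mathcal B$ of the compact set $\operatorname{supp}f\cap\mathcal C$ by finitely many sets $V_{g,N}$ and isolated points. This yields a finite $F_0\subseteq\Gamma$ containing $S^{\pm1}$ such that, for all large $n$, $f(n,k,c)=0$ unless $k\in\pi_n(F_0)$. Once $\pi_n$ is injective on $F_0$, the sum equals $\sum_{g\in F_0}f(n,\pi_n(g),c)$, which tends to $\sum_{g\in F_0}a(g)=1$.

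\textbf{The reduced norm.} You need the upper bound $\|a\|_{\cs_r(H_\infty)}\leq\sqrt3/2$. A contractive conditional expectation onto $\cs_r(\Gamma)$ only gives the opposite inequality $\|a\|_{\cs_r(\Gamma)}\leq\|a\|_{\cs_r(H_\infty)}$. The correct justification is the paper's: by the coset decomposition, $\lambda_{H_\infty}|_\Gamma$ is a multiple of $\lambda_\Gamma$, so $\cs_r(\Gamma)\subseteq\cs_r(H_\infty)$ isometrically.

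Incidentally, $\mathcal C$ is clopen in $\cG$ but not compact. This does not matter, since you discard step~1.
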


\begin{proof}
    For $n\in \N$, the groups $K_n$ are amenable, hence their trivial representation $1_{K_n}$ is weakly contained in the left regular representation $\lambda_{K_n}$ \cite[Theorem G.3.2]{BHV}. Since induction preserves weak containment \cite[Theorem F.3.5]{BHV}, the induced representation $\sigma_n=\Ind_{K_n}^{H_n}(1_{K_n})$ is weakly contained in $\Ind_{K_n}^{H_n}(\lambda_{K_n})\cong \lambda_{H_n}$ \cite[Theorem E.2.4 and Example E.1.8]{BHV}. Explicitly, $\sigma_n$ is the quasi-regular representation of $H_n$ on $\ell^2(H_n/K_n)$ given by $\sigma_n(h)(\delta_{fK_n})=\delta_{hfK_n}$.  By \cite[Theorem F.4.4]{BHV}, we have the inequality $\|\sigma_n(a)\|\leq \|\lambda_{H_n}(a)\|=\|a\|_r$. Therefore $\sigma_n$ induces a homomorphism $\cs_r(H_n)\to \mathcal{B}(\ell^2(H_n/K_n))$ still denoted by $\sigma_n$. Let $\varphi_n$ be the state on $\cs_r(H_n)$ determined by $\delta_{K_n}$:
    \[   
    \varphi_n(a)=\langle\sigma_n(a)\delta_{K_n},\delta_{K_n}\rangle,
    \]
    for all $a\in \cs_r(H_n)$. For every $k\in K_n$, one has $\sigma_n(k)\delta_{K_n}=\delta_{K_n}$, and hence $\varphi_n(u_k)=1$. By \cite[Lemma~1.2]{ChristensenNeshveyev1}, the restriction map $C_c(\cG)\to \C[H_n]$ extends to a ucp map $\theta_{x_n,r}\colon \cs_r(\cG)\to \cs_r(H_n)$.

    Consider the state $\omega_n\colon \cs_r(\cG)\to \C$ given by $\omega_n=\varphi_n\circ\theta_{x_n,r}$. A simple computation shows that if $f\in C(X)$, then $\omega_n(f)=f(x_n)$. By weak-star compactness, up to passing to a subsequence (as $\cs_r(\cG)$ is separable), we can assume that $\omega_n\to \omega$ for a state $\omega$ on $\cs_r(\cG)$. 

    For $f\in C(X)$, we have 
    \[ \omega(f)=\lim_n\omega_n(f)=\lim_nf(x_n)=f(x_\infty). 
    \] 
    Therefore \cite[Proposition 1.11]{ChristensenNeshveyev2} implies the existence of a state $\varphi$ on $\cs_e(H_\infty)$ such that $\omega=\varphi\circ \theta_{x_\infty,e}$.
    We claim that $\varphi$ restricts to the trivial character on the embedded copy of $\C[\Gamma]\subseteq\C [H_\infty]$. Fix $g\in\Gamma$ and consider
    \[
    V_g=\{(n,\pi_n(g),d):n\in\N_\infty,\ d\in C\}.
    \]
    This is a compact open bisection of $\mathcal C$, and hence of $\cG$. Its restrictions to the relevant isotropy groups are $\theta_{x_n,r}(1_{V_g})=u_{\pi_n(g)}$ for $n\in\N$ and $\theta_{x_\infty,e}(1_{V_g})=u_g$.

    Therefore for $g\in \Gamma$ we obtain
    \begin{equation}\label{eq:limit-trivial-character}
        \begin{split}
            \varphi(u_g)&=\varphi(\theta_{x_\infty,e}(1_{V_g}))=\omega(1_{V_g})\\
            &=\lim_n \varphi_n(\theta_{x_n,r}(1_{V_g})) = \lim_n \varphi_n(u_{\pi_n(g)})=1.
        \end{split}   
    \end{equation}
    
Let $a,b$ be the standard free generators of $\Gamma=\mathbb F_2$ and put
\[
    m=\frac14\bigl(u_a+u_{a^{-1}}+u_b+u_{b^{-1}}\bigr)\in\C[\Gamma]\subseteq\C [H_\infty].
\]
Equation \eqref{eq:limit-trivial-character} gives
\[
    \|m\|_{\cs_e(H_\infty)}\geq|\varphi(m)|=1.
\]
Since $m$ is the average of four unitaries, the reverse inequality follows from the triangle inequality. Therefore $\|m\|_{\cs_e(H_\infty)}=1$.

On the other hand, as a representation of the subgroup $\Gamma$, the restriction of $\lambda_{H_\infty}$ is a direct sum of copies of $\lambda_\Gamma$. Indeed,
\[
    \ell^2(H_\infty)=\bigoplus_{\Gamma h\in\Gamma\backslash H_\infty}\ell^2(\Gamma h),
\]
and each summand is $\Gamma$-invariant and unitarily equivalent to $\ell^2(\Gamma)$. It follows that
\[
    \|m\|_{\cs_r(H_\infty)}=\|m\|_{\cs_r(\Gamma)}.
\]
Kesten's computation of the norm of the simple random walk operator on $\mathbb F_2$ gives
    \[
    \|m\|_{\cs_r(H_\infty)}=\|m\|_{\cs_r(\Gamma)}=\frac{\sqrt3}{2},
    \]
see \cite[Theorem~3]{Kesten} or \cite[Theorem IV.J]{AkemannOstrand}. Consequently, the canonical quotient $q_{x_\infty}$ cannot be an isomorphism.
\end{proof}

We conclude the section by showing that $\cs_r(\cG)$ is simple, hence it cannot be used to answer Question \ref{qst:onepoint}.

\begin{prop}
    For every unit point $x\in X$, the isotropy group $\cG_x^x$ is $\cs$-simple. Therefore $\cs_r(\cG)$ is simple.
\end{prop}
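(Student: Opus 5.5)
First I will compute the isotropy group $\cG_x^x$ for $x=(n,c)\in X$. A reduced word $a_1\cdots a_l$ in $\cG=\mathcal C*_X\mathcal H$ alternates between nonunit arrows of $\mathcal C$, which are isotropy arrows, and nonunit arrows of $\mathcal H$, which are of the form $(T^m y, m, y)$ with $m\neq 0$. Following the word from source to range, the $\mathcal C$-letters leave the unit point unchanged and each $\mathcal H$-letter moves it by $T^m$. Hence the word is isotropy at $x$ exactly when the sum of the exponents $m$ of its $\mathcal H$-letters is $0$, up to the periodicity of $x$ under $T$.

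A minimal homeomorphism of an infinite space has no periodic points, so the condition is exactly that the exponents sum to $0$. I will therefore identify $\cG_x^x$ with the kernel of the homomorphism $\cC_{\mathrm{orb}}*\Z\to\Z$. Concretely, I will show that $\cG_x^x$ is isomorphic to the free product $\mathop{\ast}_{j\in\Z} \mathcal C_{T^j x}^{T^j x}$ of the isotropy groups of $\mathcal C$ along the orbit. The map sends a word to the product of the conjugates $\gamma_j c\gamma_j^{-1}$, where $\gamma_j=(T^jx,j,x)$; this is the standard Kurosh/Bass--Serre description of the kernel of $A*\Z\to\Z$. In the groupoid setting I plan to verify it directly via normal forms. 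Insert the arrows $\gamma_j^{\pm1}$ between consecutive $\mathcal C$-letters and collapse the consecutive $\mathcal H$-letters that result. This gives a bijection between reduced words in $\cG_x^x$ and reduced words in the free product of the groups $\gamma_j^{-1}\mathcal C_{T^jx}^{T^jx}\gamma_j\leq\cG_x^x$, and it is compatible with multiplication.

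Each factor is $K_n$ or $\Gamma=\mathbb F_2$, depending on the first coordinate of $T^jx$, and each is nontrivial as long as $K_n\neq\{e\}$. This requires $\Gamma_n\neq\Gamma$, which I can arrange by dropping the trivial first terms of the sequence. So $\cG_x^x$ is a free product of infinitely many, in particular at least three, nontrivial groups. Such a group is $\cs$-simple by the classical result of Paschke and Salinas, or by de la Harpe's Powers-group criterion. An alternative route is to observe that $\cG_x^x$ has trivial amenable radical, being a nontrivial free product other than $\Z/2*\Z/2$. Together with the fact that the free product acts on its Bass--Serre tree, this gives $\cs$-simplicity via Kennedy's criterion in Theorem~\ref{thm:kennedy}, but I would cite Paschke--Salinas directly.

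Simplicity of $\cs_r(\cG)$ then follows from Theorem~\ref{thm:mainproved}. Condition (2) holds trivially because every point has $\cs$-simple isotropy, so condition (1) holds. The main obstacle I expect is making the identification of $\cG_x^x$ with a free product rigorous. The care needed is in checking that the reduction rules in $\mathcal C*_X\mathcal H$ correspond exactly to the reductions in the free product of the conjugated factors. In particular, two adjacent $\mathcal H$-letters $\gamma_j^{-1}\gamma_k$ merge into one letter, and they cancel exactly when $j=k$.
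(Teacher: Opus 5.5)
Your proposal is correct and follows essentially the paper's route. You identify $\cG_x^x$ with $\mathop{\ast}_{y\in\mathcal O_T(x)}\mathcal C_y^y$ by conjugating with the unique $\mathcal H$-arrows out of $x$, then apply the free-product $\cs$-simplicity criterion and Theorem~\ref{thm:mainproved}. The only variation is that your normal-form bijection gives surjectivity directly, where the paper uses a subgroupoid-generation argument.

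One small repair is needed: do not alter the sequence $(\Gamma_n)$, since that changes the groupoid in the statement. Trivial free factors can simply be discarded. The dense orbit meets every open set $\{n\}\times C$, and $|K_n|\geq 3$ for all large $n$, so there are infinitely many factors with at least three elements; this is how the paper concludes.

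Also, with $\gamma_j\colon x\to T^jx$, the conjugates must be $\gamma_j^{-1}c\gamma_j$ and the junction letters $\gamma_j\gamma_k^{-1}$.
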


\begin{proof}
Since $T$ is minimal, the induced action of $\Z$ is free. Hence the transformation groupoid $\mathcal H=X\rtimes_T\Z$ is principal. Fix $x\in X$ and write
\[
    \mathcal O_T(x)=\{T^k(x):k\in\Z\}.
\]
For every $y\in\mathcal O_T(x)$, let $\eta_y\in\mathcal H$ be the unique arrow with source $x$ and range $y$. Conjugation by $\eta_y$ gives an embedding
\[
    \alpha_y\colon\mathcal C_y^y\longrightarrow\cG_x^x,
    \qquad
    \alpha_y(c)=\eta_y^{-1}c\eta_y.
\]
We claim that $\cG_x^x\cong \mathop{*}_{y\in\mathcal O_T(x)}\mathcal C_y^y$.

Put $P_x=\mathop{*}_{y\in\mathcal O_T(x)}\mathcal C_y^y$ and denote by $\iota_y\colon\mathcal C_y^y\to P_x$ the canonical inclusion. By the universal property of the free product, there is a unique group homomorphism
\[
    \Phi_x\colon P_x\longrightarrow\cG_x^x
\]
such that $\Phi_x\circ\iota_y=\alpha_y$ for every $y\in\mathcal O_T(x)$.

We first prove that $\Phi_x$ is injective. Let
\[
    \iota_{y_1}(c_1)\cdots\iota_{y_m}(c_m)
\]
be a nonempty reduced word in $P_x$. Thus $c_i\in\mathcal C_{y_i}^{y_i}\setminus\{y_i\}$ and $y_i\neq y_{i+1}$ for every $i<m$. Its image under $\Phi_x$ is
\[
\begin{split}
    \Phi_x\bigl(\iota_{y_1}(c_1)\cdots\iota_{y_m}(c_m)\bigr)
      &=
    \eta_{y_1}^{-1}c_1\eta_{y_1}
    \eta_{y_2}^{-1}c_2\eta_{y_2}
    \cdots
    \eta_{y_m}^{-1}c_m\eta_{y_m} \\
      &=
    \eta_{y_1}^{-1}c_1
    \bigl(\eta_{y_1}\eta_{y_2}^{-1}\bigr)c_2
    \cdots
    \bigl(\eta_{y_{m-1}}\eta_{y_m}^{-1}\bigr)c_m\eta_{y_m}.
\end{split}
\]
Since $y_i\neq y_{i+1}$, each arrow $\eta_{y_i}\eta_{y_{i+1}}^{-1}$ is a nonunit arrow of $\mathcal H$. After omitting $\eta_{y_1}^{-1}$ when $y_1=x$ and $\eta_{y_m}$ when $y_m=x$, the expression above is therefore a nonempty reduced word in the groupoid free product $\mathcal C*_X\mathcal H$. By uniqueness of reduced words, it is not a unit. Hence $\Phi_x$ is injective.

We now prove surjectivity. Consider the following subgroupoid of the reduction $\cG|_{\mathcal O_T(x)}$:
\[
    \mathcal R_x
       =
    \left\{
        \eta_y\Phi_x(p)\eta_z^{-1}
        \colon
        y,z\in\mathcal O_T(x),\ p\in P_x
    \right\}.
\]
Indeed, if the two arrows below are composable, then
\[
    \bigl(\eta_y\Phi_x(p)\eta_z^{-1}\bigr)
    \bigl(\eta_z\Phi_x(q)\eta_w^{-1}\bigr)
      =
    \eta_y\Phi_x(pq)\eta_w^{-1},
\]
and
\[
    \bigl(\eta_y\Phi_x(p)\eta_z^{-1}\bigr)^{-1}
      =
    \eta_z\Phi_x(p^{-1})\eta_y^{-1}.
\]

The subgroupoid $\mathcal R_x$ contains both factors restricted to $\mathcal O_T(x)$. If $c\in\mathcal C_y^y$, then
\[
    c
      =
    \eta_y\Phi_x\bigl(\iota_y(c)\bigr)\eta_y^{-1}
      \in
    \mathcal R_x.
\]
On the other hand, if $h\in\mathcal H$ has source $z$ and range $y$, then the principality of $\mathcal H$ gives
\[
    h=\eta_y\eta_z^{-1}
      =
    \eta_y\Phi_x(1)\eta_z^{-1}
      \in
    \mathcal R_x.
\]
Since $\cG|_{\mathcal O_T(x)}$ is generated by $\mathcal C|_{\mathcal O_T(x)}$ and $\mathcal H|_{\mathcal O_T(x)}$, it follows that
\[
    \mathcal R_x=\cG|_{\mathcal O_T(x)}.
\]

In particular, every $g\in\cG_x^x$ can be written as
\[
    g=\eta_x\Phi_x(p)\eta_x^{-1}=\Phi_x(p)
\]
for some $p\in P_x$, because $\eta_x=x$. Thus $\Phi_x$ is surjective and hence an isomorphism.

To show $\cs$-simplicity of $\cG_x^x$, we write $P_x$ as a free product of groups with at least three elements, and apply \cite[Corollary 12 and Theorem 14]{deLaHarpe}. 

Choose distinct $n,m\in\N$ such that $|K_n|,|K_m|\geq3$. Since the
$T$-orbit of $x$ is dense and the sets $\{n\}\times C$ and
$\{m\}\times C$ are nonempty and open, there exist distinct
$y,z\in\mathcal O_T(x)$ whose first coordinates are respectively
$n$ and $m$. Thus $|\mathcal C_y^y|\geq3$ and $|\mathcal C_z^z|\geq3$. Writing
\[
    P_x = \mathcal C_y^y* (\mathop{*}_{w\in\mathcal O_T(x)\setminus\{y\}} \mathcal C_w^w),
\]
both free factors have at least three elements.

Theorem \ref{thm:mainproved} implies that $\cs_r(\cG)$ is simple.
\end{proof}


\end{document}